\documentclass[11pt]{amsart}
\usepackage{amscd,amsfonts,amsmath,amsthm,amssymb,mathrsfs,verbatim}
\usepackage{pstcol,pst-plot,pst-3d}
\usepackage{lmodern,pst-node, xfrac}
\usepackage{amsmath}
\usepackage[table]{xcolor}

\usepackage{pstricks}

\newcommand\z{\cellcolor{darkgray!10}}


%
%
%
\def\NZQ{\mathbb}               
\def\NN{{\NZQ N}}

\def\ZZ{{\NZQ Z}}

%
%
\def\frk{\frak}               

\def\Phi{{\frk n}}
\def\Phi{{\frk N}}
%
\def\MA{{\mathcal A}}
\def\MB{{\mathcal B}}

\def\MM{{\mathcal M}}

\def\MS{{\mathcal S}}

\def\MG{{\mathcal G}}
\def\ML{{\mathcal L}}

%

%
\def\opn#1#2{\def#1{\operatorname{#2}}} 
%
\opn\chara{char} \opn\length{\ell} \opn\pd{pd} \opn\rk{rk}
\opn\projdim{proj\,dim} \opn\injdim{inj\,dim} \opn\rank{rank}
\opn\depth{depth} \opn\grade{grade} \opn\height{height}
\opn\embdim{emb\,dim} \opn\codim{codim}

\opn\Tr{Tr} \opn\bigrank{big\,rank}
\opn\superheight{superheight}\opn\lcm{lcm}
\opn\trdeg{tr\,deg}
\opn\reg{reg} \opn\lreg{lreg} \opn\ini{in} \opn\lpd{lpd}
\opn\size{size}\opn\bigsize{bigsize}
\opn\cosize{cosize}\opn\bigcosize{bigcosize}
\opn\sdepth{sdepth}\opn\sreg{sreg}
\opn\link{link}\opn\fdepth{fdepth}
%
\opn\div{div} \opn\Div{Div} \opn\cl{cl} \opn\Cl{Cl}
%
%
\opn\Spec{Spec} \opn\Supp{Supp} \opn\supp{supp} \opn\Sing{Sing}
\opn\Ass{Ass} \opn\Min{Min}\opn\Mon{Mon} \opn\dstab{dstab} \opn\astab{astab}
%
%
\opn\Ann{Ann} \opn\Rad{Rad} \opn\Soc{Soc}
%
%
\opn\Im{Im} \opn\Ker{Ker} \opn\Coker{Coker} \opn\Am{Am}
\opn\Hom{Hom} \opn\Tor{Tor} \opn\Ext{Ext} \opn\End{End}
\opn\Aut{Aut} \opn\id{id} \opn\span{span}

\opn\nat{nat}
\opn\pff{pf}
\opn\Pf{Pf} \opn\GL{GL} \opn\SL{SL} \opn\mod{mod} \opn\ord{ord}
\opn\Gin{Gin} \opn\Hilb{Hilb}\opn\sort{sort}
%
%
\opn\aff{aff} \opn\con{conv} \opn\relint{relint} \opn\st{st}
\opn\lk{lk} \opn\cn{cn} \opn\core{core} \opn\vol{vol}
\opn\link{link} \opn\star{star}\opn\lex{lex} \opn\Gr{Gr}
\opn\gr{gr}

%
%

\def\pot#1#2{#1[\kern-0.28ex[#2]\kern-0.28ex]}

%
%
\opn\dirlim{\underrightarrow{\lim}}
\opn\inivlim{\underleftarrow{\lim}}
%
%
%

%
%

\def\Implies{\ifmmode\Longrightarrow \else
        \unskip${}\Longrightarrow{}$\ignorespaces\fi}
\def\implies{\ifmmode\Rightarrow \else
        \unskip${}\Rightarrow{}$\ignorespaces\fi}
\def\iff{\ifmmode\Longleftrightarrow \else
        \unskip${}\Longleftrightarrow{}$\ignorespaces\fi}

\let\:=\colon
\newtheorem{Theorem}{Theorem}[section]
\newtheorem{Lemma}[Theorem]{Lemma}
\newtheorem{Corollary}[Theorem]{Corollary}
\newtheorem{Proposition}[Theorem]{Proposition}
\newtheorem{Remark}[Theorem]{Remark}

%
%
\let\epsilon\varepsilon
\let\kappa=\varkappa
%
%
\textwidth=15cm \textheight=22cm \topmargin=0.5cm
\oddsidemargin=0.5cm \evensidemargin=0.5cm \pagestyle{plain}
%
%
\def\qed{\ifhmode\textqed\fi
      \ifmmode\ifinner\quad\qedsymbol\else\dispqed\fi\fi}
\def\textqed{\unskip\nobreak\penalty50
       \hskip2em\hbox{}\nobreak\hfil\qedsymbol
       \parfillskip=0pt \finalhyphendemerits=0}
\def\dispqed{\rlap{\qquad\qedsymbol}}

%
\opn\dis{dis}
\def\pnt{{\raise0.5mm\hbox{\large\bf.}}}

\opn\Lex{Lex}



\begin{document}

\title{Unboundedness of Markov complexity of monomial curves in $\mathbb{A}^n$ for $n\geq 4$.}



\author{Dimitra Kosta         \and
        Apostolos Thoma 
}


\address{Dimitra Kosta, School of Mathematics and Statistics, University of Glasgow, United Kingdom }
\email{Dimitra.Kosta@glasgow.ac.uk}

\address{Apostolos Thoma, Department of Mathematics, University of Ioannina, Ioannina 45110, Greece }
\email{athoma@uoi.gr}
\keywords{Toric ideals, Markov basis, Graver basis, Lawrence liftings}

\begin{abstract}
Computing the complexity of Markov bases is an extremely challenging problem; no formula is known in general and there are very few classes of toric ideals for which the Markov complexity has been computed. A monomial curve $C$  in $\mathbb{A}^3$ has Markov complexity $m(C)$ two or three. Two if the monomial curve is complete intersection and three otherwise. Our main result shows that
there is no $d\in \NN$ such that $m(C)\leq d$ for all monomial curves $C$ in $\mathbb{A}^4$.
The same result is true even if we restrict to complete intersections. We extend this result to all monomial curves in $\mathbb{A}^n, n\geq 4$.


\end{abstract}

\maketitle

\section{Introduction}
\label{intro}

 Much of the current interest in Markov bases of toric ideals and their complexity, at least from an applications perspective, began with the seminal paper \cite{DS}, which constitutes one of the first connections between commutative algebra and statistics. This work proposes algebraic algorithms to construct a connected Markov chain over high-dimensional contingency tables with fixed marginals, using Gr\"obner bases. Motivated by this work, the Markov bases of certain contigency tables were studied in \cite{AT} and also the first examples of matrices with finite Markov complexity were provided.

 In an effort to better understand the Markov basis $\MM(\MA)$ of a toric ideal associated to a matrix $\MA$, the study of auxiliary generating sets, such as the indispensable set $\MS(\MA)$ and the Graver basis $\MG(\MA)$ of $\MA$, is employed. Building on the work by  \cite{AT}, it was proven in \cite{SS} that the Markov complexity is bounded above by the Graver complexity, and since the latter one is finite, the Markov complexity is also finite.

 In \cite{CKT} a geometric description is given for the elements of the Markov basis $\MM(\MA)$ and the indispensable set $\MS(\MA)$, which uses the correspondence between fibers of $\MA$ and certain connected components of a certain simplicial complex associated to $\MA$. At the same time, in a more algebraic approach adopted to describe the indispensable set $\MS(\MA)$, the notion of proper semiconformal decomposition was introduced in \cite{HS}. Building on this idea, a complete algebraic characterization for the elements of the indispensable set $\MS(\MA)$ and the Markov basis $\MM(\MA)$ is provided in \cite{HaraThomaVladoiu14} using extended notions of conformality, i.e. conformal, semiconformal, strongly semiconformal (see Section~\ref{section:preliminaries} for definitions). This description will be employed throughout this paper.

 Moreover, Graver bases and their complexity have also very important applications in Integer Programming, where considerable effort has been put into estimating the growth of the Graver complexity, as this specifies the time complexity of various $n$-fold integer programmes (see \cite[Chapter 4]{Onn}). Most efforts in the Integer Programming community have focused on proving exponential lower bounds for the Graver complexity of complete bipartite graphs, as in \cite{BerOnn}, \cite{KudTak} and \cite{FinHem}. It is still an open conjecture that the Graver complexity of the complete bipartite graph $K_{3,m}$ is equal to $3^{m-1}$.

 In \cite{HaraThomaVladoiu14}, it is shown that the Markov complexity of the monomial curve $\MA = (n_1, n_2, n_3)$ is equal to
 two if the toric ideal $I_{\MA}$ is complete intersection and equal to three otherwise, answering a question posed by Santos and Sturmfels (see \cite[Example 6]{SS}). However, computing the complexity $m(\MA)$ of Markov bases is an extremely challenging problem; no formula for the $m(A)$ is known in general and there are very few classes of toric ideals in the literature for which the complexity has been computed \cite{AT, HS, SS, HaraThomaVladoiu14}.

 The purpose of this paper is to study the Markov complexity $m(\MA)$ of monomial curves in  $\mathbb{A}^m, m\geq 4$ and demonstrate that the result of \cite{HaraThomaVladoiu14}, which bounds the Markov complexity of complete intersection monomial curves in $\mathbb{A}^3$ by their codimension, is a special property of monomial curves in $\mathbb{A}^3$ and cannot be generalised to higher dimensions. In particular, we obtain that complete intersection monomial curves in $\mathbb{A}^4$ may have arbitrary large Markov complexity; this is a corollary of the following Theorem which is the main result of this paper.

\medskip
\textbf{Theorem~\ref{large}}. \emph{Monomial curves in $\mathbb{A}^4$ may have arbitrary large Markov complexity.}
\medskip

To prove this, we need to find a family  $\MA_{r} = (a_1(r), a_2(r), a_3(r), a_4(r))$ of monomial curves in $\mathbb{A}^4$, where the numbers $a_1(r), a_2(r), a_3(r), a_4(r)$ depend on a parameter $r$, such that the Markov complexity of $\MA_n$,
the $n^{th}$ member of the family, is at least $n$. That meant finding an element of type $n$ that belongs
to $\MM(\MA_n^{(n)})$. After several months working with the computational commutative algebra package 4ti2 \cite{4ti2},
we did find one such family, $\MA_n=(1,n, n^2-n, n^2-1)$ and an element of type $n$ in $\ML(\MA_n^{(n)})$, which we managed
to prove in a simple way belongs to every Markov basis of $\MA_n^{(n)}$.

The paper is organised in the following manner. Section~\ref{section:preliminaries} contains all the necessary
definitions and properties of different types of decompositions. It also features Theorem~\ref{restriction} which states
that Markov bases of higher Lawrence liftings behave well with respect to elimination and implies necessary conditions
for the Markov complexity to be equal to 2. In Section~\ref{Section: MonomialCurve}, we provide the guiding example of a family
of monomial curves in $\mathbb{A}^4$ with arbitrary large Markov complexity.
Then, the final Section~\ref{Section: MainResult} includes the proof of our main result Theorem~\ref{large} which we also generalise
to monomial curves in $\mathbb{A}^m, m\geq 4$.

\section{Preliminaries}

\label{section:preliminaries}


Consider a set of vectors $\MA=\{{\bf a}_1,\ldots,{\bf a}_n\}\subset\NN^m$ and the corresponding matrix
$A\in\MM_{m\times n}(\NN)$ whose columns are the vectors of $\MA$, where  $n,m\in \NN$. We  let $\ML({\MA}):=\Ker_{\ZZ}(A)$ be the corresponding sublattice of $\ZZ^n$  and denote by $I_{\MA}$ the corresponding  toric ideal of $\MA$ in $\mathbb{k}[x_1,\ldots, x_n]$, where $\mathbb{k}$ is a field. We recall that $I_{\MA}$ is generated by all binomials of the form $x^{\bf u }-x^{\bf w}$ where
$\ {\bf u}-{\bf w} \in  \mathcal{L}(\MA)$.

A \emph{Markov basis} $\MM $ of $\MA $ is a finite subset of $\Ker_{\ZZ}(A)$ such that whenever ${\bf w}, {\bf u} \in \mathbb{N}^n$ with $x^{{\bf w}}, x^{{\bf u}}$ in the same fiber (namely ${\bf w}-{\bf u} \in \Ker_{\ZZ}(A)$ ), there exists a subset $ \{ {\bf v}_i : i = 1, \cdots, s \}$ of $\MM$ that connects ${\bf w}$ to ${\bf u}$. This means that $({\bf w}- {{\sum^p_{i=1}}}{{\bf v}_i})\in \NN^n$  for all $1\leq p\leq s$ and   ${\bf w}- {\bf u}=\sum^s_{i=1}{\bf v}_i $. We call a Markov basis $\MM$ of $\MA$ {\it minimal} if no subset of $\MM$ is a Markov basis of $\MA$. For a vector ${\bf u}\in \mathcal{L}(\MA)$, we denote by $\bf u^+$, ${\bf u}^-$ the unique vectors in $ \NN^{n}$ such that  $\bf u= \bf u^+ -\bf u^-$. According to a classical result by Diaconis and Sturmfels, if $\MM$  is a minimal Markov basis of $\MA$, then the set $\{x^{\bf u^+}-x^{\bf u^-}: \ {\bf u}\in \MM\}$ is a minimal generating set of $I_{\MA}$ (see \cite[Theorem 3.1]{DS}). The union of all minimal Markov bases of $\MA$, where we identify  elements that differ by a sign, is called the {\em universal Markov basis} of $\MA$ and is denoted by ${\MM}(\MA)$ (see \cite[Definition 3.1]{HS}).

The {\it indispensable subset} of the universal Markov basis $\MM(\MA)$, which is denoted by $\MS(\MA)$, is the intersection of all minimal Markov bases of $\MA$ via the same identification.  The {\it Graver basis} of $\MA$,  $\MG(\MA)$,  is the subset of $\mathcal{L}(\MA)$ whose elements
have {  no} {\it  proper conformal decomposition}; namely, an element ${\bf u}\in \mathcal{L}(\MA)$
belongs to the Graver basis $\MG(\MA)$ if whenever ${\bf u}$ can be written in the form ${\bf v}+_{c} {\bf w}$,
where ${\bf v}, {\bf w} \in \mathcal{L}(\MA)$ and  ${\bf u}^+={\bf v}^++ {\bf w}^+$,  ${\bf u}^-={\bf v}^-+ {\bf w}^-$,
we conclude that either ${\bf v}={\bf 0}$ or  ${\bf w}={\bf 0}$, see \cite[Section 4]{St}. The {\it Graver basis} of $\MA$ is always a finite set and contains the universal Markov basis of $\MA$,  see \cite[Section  7]{St}. Therefore, we have the following inclusions
 \[\MS(\MA)\subseteq\MM(\MA)\subseteq\MG(\MA) .\]

 The notion of a semiconformal decomposition was introduced in \cite[Definition 3.9]{HS}.
 Let ${\bf u},{\bf v},{\bf w}\in\ML({\MA})$.
 We say that  ${\bf u}={\bf v}+_{sc}{\bf w}$  is a {\bf semiconformal decomposition}
 of ${\bf u}$ if ${\bf u}={\bf v}+{\bf w}$ and ${\bf v}(i)>0$ implies that ${\bf w}(i)\geq 0$ and ${\bf w}(i)<0$ implies
 that ${\bf v}(i)\leq 0$ for  $1\leq i\leq n$.
 Here ${\bf v}(i)$ denotes the $i^{th}$ coordinate of the vector ${\bf v}$. We call the decomposition {\bf proper} if both ${\bf v}, {\bf w}$ are nonzero.
 It is easy to see that  ${\bf u}={\bf v}+_{sc} {\bf w}$  if and only if  ${\bf u}^+\geq {\bf v}^+$ and ${\bf u}^-\geq {\bf w}^-$.
 We remark that  $\bf 0$ cannot be written as the semiconformal sum of two nonzero vectors  since $\ML({\MA})\cap \NN^n=\{\bf 0\}$.

  The lack of a proper semiconformal decomposition is not only a sufficient condition for an element to be in ${\MS}(\MA)$ as was shown in  \cite[Lemma 3.10]{HS}, but it is also a necessary condition by \cite[Proposition 1.1]{HaraThomaVladoiu14}.
 \begin{Proposition}
\label{indispensable}
 The set of indispensable elements ${\MS}(\MA)$ of $\MA$ consists  of all nonzero vectors in $\ML({\MA})$ with no proper   semiconformal decomposition.
\end{Proposition}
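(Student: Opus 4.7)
The plan is to prove the only non-trivial direction by contraposition: if ${\bf u}\in\ML(\MA)\setminus\{{\bf 0}\}$ admits a proper semiconformal decomposition ${\bf u}={\bf v}+_{sc}{\bf w}$, then ${\bf u}$ is dispensable, i.e.\ some minimal Markov basis of $\MA$ omits $\pm{\bf u}$. The reverse implication is already supplied by \cite[Lemma 3.10]{HS}.

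The first step is to exhibit a third lattice point in the $\MA$-fiber of ${\bf u}$. Using the equivalent form ${\bf u}^+\geq{\bf v}^+$ and ${\bf u}^-\geq{\bf w}^-$ stated just above the proposition, I set
$${\bf z}:={\bf u}^+-{\bf v}^++{\bf v}^- \;=\; {\bf u}^--{\bf w}^-+{\bf w}^+.$$
The first expression forces ${\bf z}\in\NN^n$, and the second identifies it as another element of the same $\MA$-fiber as ${\bf u}^\pm$. Because ${\bf v},{\bf w}$ are nonzero and $\ML(\MA)\cap\NN^n=\{{\bf 0}\}$, both differ from $\pm{\bf u}$, and ${\bf z}\neq{\bf u}^+,{\bf u}^-$. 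Hence the moves ${\bf v}$ and ${\bf w}$ furnish a path ${\bf u}^+\to{\bf z}\to{\bf u}^-$ entirely inside $\NN^n$ that bypasses ${\bf u}$.

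Next I would upgrade this local detour to a global dispensability statement. For any Markov basis $\MM$ containing ${\bf u}$, form $\MM':=(\MM\setminus\{\pm{\bf u}\})\cup\{{\bf v},{\bf w}\}$. I would show $\MM'$ is still a Markov basis as follows: given any $\MM$-connecting sequence between fiber-equivalent points ${\bf p},{\bf q}\in\NN^n$, each application of $\pm{\bf u}$ at a state ${\bf r}\geq{\bf u}^+$ (respectively ${\bf r}\geq{\bf u}^-$) is replaced by the translated two-step detour through ${\bf r}-{\bf v}$, which remains in $\NN^n$ precisely because the semiconformal inequalities propagate under translation. Any minimal subset of $\MM'$ that is still a Markov basis then omits $\pm{\bf u}$, establishing ${\bf u}\notin\MS(\MA)$.

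The main obstacle is exactly the detour verification---that substituting $\pm{\bf u}$ by the pair ${\bf v},{\bf w}$ inside an arbitrary Markov walk never leaves the nonnegative orthant---and the definition of semiconformality is tailored to make this automatic. An equivalent algebraic packaging of the argument is the binomial identity
$$x^{{\bf u}^+}-x^{{\bf u}^-}=x^{{\bf u}^+-{\bf v}^+}\bigl(x^{{\bf v}^+}-x^{{\bf v}^-}\bigr)+x^{{\bf u}^--{\bf w}^-}\bigl(x^{{\bf w}^+}-x^{{\bf w}^-}\bigr),$$
which expresses the binomial of ${\bf u}$ as a polynomial combination of two distinct, strictly smaller generators of $I_\MA$; a $\ZZ^m$-graded Nakayama argument then delivers dispensability in parallel.
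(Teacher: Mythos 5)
Your proof is correct. Note, however, that the paper does not actually prove this proposition: it is stated as a quotation of \cite[Lemma 3.10]{HS} (sufficiency, which you also outsource) together with \cite[Proposition 1.1]{HaraThomaVladoiu14} (necessity, which is the direction you prove), so there is no in-paper argument to compare against. Your argument for necessity is the standard one, and the key inequalities check out: with ${\bf z}={\bf u}^+-{\bf v}={\bf u}^-+{\bf w}\in\NN^n$, a move $-{\bf u}$ applied at a state ${\bf r}\geq{\bf u}^+$ detours through ${\bf r}-{\bf v}\geq{\bf u}^+-{\bf v}={\bf z}\geq 0$, and a move $+{\bf u}$ applied at ${\bf r}\geq{\bf u}^-$ detours through ${\bf r}+{\bf w}\geq{\bf u}^-+{\bf w}={\bf z}\geq 0$ (a small slip of phrasing: in this second case the intermediate state is ${\bf r}+{\bf w}$, i.e.\ the endpoint minus ${\bf v}$, not ${\bf r}-{\bf v}$). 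Your observation that ${\bf v},{\bf w}\neq\pm{\bf u}$, which uses $\ML(\MA)\cap\NN^n=\{{\bf 0}\}$, is exactly what guarantees that a minimal Markov basis extracted from the finite Markov basis $(\MM\setminus\{\pm{\bf u}\})\cup\{{\bf v},{\bf w}\}$ omits $\pm{\bf u}$, hence ${\bf u}\notin\MS(\MA)$. The only caveat concerns the closing Nakayama remark: the monomial coefficient $x^{{\bf u}^+-{\bf v}^+}$ can equal $1$ when ${\bf u}^+={\bf v}^+$ (which a proper semiconformal decomposition does not exclude), so the graded Nakayama shortcut needs an extra word in that case; your combinatorial walk argument covers it without modification, so nothing is lost.
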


Let ${\bf u}, {\bf u}_1,\ldots,{\bf u}_l \in\ML({\MA})$, $l\geq 2$. We say that ${\bf u}=_{ssc}{\bf u}_1+\cdots+{\bf u}_l$,  is a {\bf strongly semiconformal decomposition} if  ${\bf u}={\bf u}_1+\cdots+{\bf u}_l$ and the following conditions are satisfied:
\[
{\bf u}^+>{\bf u}_1^+ \quad \text{ and } \quad {\bf u}^+ >(\sum_{j=1}^{i-1} {\bf u}_j)+{\bf u}_i^+ \ \text{ for all } \ i=2,\ldots,l.
\]
When $l=2$, we simply write  ${\bf u}={\bf u}_1+_{ssc} {\bf u}_2$. Note that ${\bf u}={\bf u}_1+_{ssc} {\bf u}_2$ implies that ${\bf u}^+>{\bf u}_1^+$ and ${\bf u}^->{\bf u}_2^-$.
We say that the decomposition is {\bf proper} if all ${\bf u}_1,\ldots,{\bf u}_l$ are nonzero.
We remark that if
${\bf u}=_{ssc}{\bf u}_1+\cdots+{\bf u}_l$   is proper then
 ${\bf u}^+,{\bf u}^+-{\bf u}_1,\ldots,{\bf u}^+-\sum_{i=1}^l {\bf u}_i={\bf u}^-\in \NN^n$ and thus are distinct elements of
 $ \mathcal{F}_{{\bf u}}$.

 We also have the following characterisation of the elements of the universal Markov basis as shown in \cite{HaraThomaVladoiu14}.

\begin{Proposition}\label{universalmarkov} The universal Markov basis ${\MM}(\MA)$ of $\MA$ consists  of all nonzero vectors in $\ML({\MA})$ with no proper strongly  semiconformal decomposition.
\end{Proposition}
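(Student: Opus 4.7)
The plan is to prove the biconditional in both directions, each via contrapositive, paralleling the structure of Proposition~\ref{indispensable} for $\MS(\MA)$ but with additional bookkeeping of the intermediate partial sums required by strong semiconformality.

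For the forward direction, suppose $\mathbf{u}$ admits a proper decomposition $\mathbf{u} =_{ssc} \mathbf{u}_1 + \cdots + \mathbf{u}_l$. As already noted after the definition, the vectors $\mathbf{w}_i := \mathbf{u}^+ - \sum_{j=1}^{i} \mathbf{u}_j$ for $i = 0,\ldots, l$ form a sequence of $l+1$ distinct elements of $\mathcal{F}_{\mathbf{u}}$ lying in $\NN^n$, beginning at $\mathbf{u}^+$ and ending at $\mathbf{u}^-$. Rewriting the defining inequality $\mathbf{u}^+ > (\sum_{j=1}^{i-1} \mathbf{u}_j) + \mathbf{u}_i^+$ as $\mathbf{w}_{i-1} > \mathbf{u}_i^+$ and invoking $\ML(\MA) \cap \NN^n = \{\mathbf{0}\}$, I would first check that no $\mathbf{u}_i$ equals $\pm \mathbf{u}$ (since $\mathbf{w}_{i-1} \geq \mathbf{u}^{\pm}$ in $\NN^n$ while lying in the same fiber as $\mathbf{u}^{\pm}$ would force equality). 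Then, for any Markov basis $\MM$ containing $\mathbf{u}$, I would argue by induction on the total degree $|\mathbf{u}^+| + |\mathbf{u}^-|$ that $\MM \setminus \{\mathbf{u}\}$ is still a Markov basis. Each step $\mathbf{w}_{i-1} \to \mathbf{w}_i$ is effected by a move $\mathbf{u}_i$ of strictly smaller total degree than $\mathbf{u}$, so by the inductive hypothesis $\mathbf{w}_{i-1}$ and $\mathbf{w}_i$ can already be connected through $\MM \setminus \{\mathbf{u}\}$. Concatenation yields a connection from $\mathbf{u}^+$ to $\mathbf{u}^-$ avoiding $\mathbf{u}$, contradicting the minimality of $\MM$.

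For the backward direction, suppose $\mathbf{u} \notin \MM(\MA)$ and fix a minimal Markov basis $\MM$. Since $\mathbf{u} \notin \MM$, there is a sequence $\mathbf{u}^+ = \mathbf{w}_0, \ldots, \mathbf{w}_k = \mathbf{u}^-$ in $\mathcal{F}_{\mathbf{u}}$ whose consecutive differences $\mathbf{u}_i := \mathbf{w}_{i-1} - \mathbf{w}_i$ all lie in $\pm\MM \setminus \{\pm \mathbf{u}\}$. Choose such a sequence minimal in a well-ordering---shortest, and among the shortest lexicographically least on the tuple of intermediate vectors. The goal is to show $\mathbf{u} = \mathbf{u}_1 + \cdots + \mathbf{u}_k$ is then strongly semiconformal, i.e., $\mathbf{w}_{i-1} > \mathbf{u}_i^+$ for each $i$. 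Since the move $\mathbf{u}_i$ is applied at $\mathbf{w}_{i-1}$, automatically $\mathbf{w}_{i-1} \geq \mathbf{u}_i^+$; failure of strictness means $\mathbf{w}_{i-1} = \mathbf{u}_i^+$ and $\mathbf{w}_i = \mathbf{u}_i^-$, and the plan is to derive from this coincidence a local rearrangement of the path---swapping, merging, or splicing the neighbouring moves $\mathbf{u}_{i-1}$ and $\mathbf{u}_{i+1}$---that produces a strictly smaller connecting sequence, contradicting the chosen extremality.

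The main obstacle is precisely this last step: translating the failure of strict domination $\mathbf{w}_{i-1} = \mathbf{u}_i^+$ into a concrete simplification of the path. The natural approach is a case analysis based on the overlap pattern between consecutive $\mathbf{u}_i$'s, combined with an outer induction on $k$ whose base case reduces to the two-term decomposition and can be handled via Proposition~\ref{indispensable}. Getting the combinatorics of the rearrangement to cover all overlap patterns without accidentally reintroducing the move $\mathbf{u}$ is the delicate part of the argument, and I would expect it to constitute the bulk of the proof.
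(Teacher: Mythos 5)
First, a point of comparison: the paper does not actually prove this proposition --- it is quoted from \cite{HaraThomaVladoiu14} --- so there is no in-paper argument to measure yours against, and I can only assess the sketch on its own terms. Your forward direction has the right shape but is set up on the wrong induction parameter. The claim that each $\mathbf{u}_i$ has ``strictly smaller total degree than $\mathbf{u}$'' is false in general: the inequality $\mathbf{w}_{i-1}>\mathbf{u}_i^+$ only bounds $\mathbf{u}_i^+$ by an element of the fiber $\mathcal{F}_{\mathbf{u}}$, and distinct elements of one fiber can have very different coordinate sums (for the curves in this paper, $(n,0,0,0)$ and $(0,1,0,0)$ lie in the same fiber), so $|\mathbf{u}_i^+|+|\mathbf{u}_i^-|$ may exceed $|\mathbf{u}^+|+|\mathbf{u}^-|$. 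What is true, and suffices without any induction, is that $\mathbf{w}_{i-1}-\mathbf{u}_i^+$ is a nonzero element of $\NN^n$, so $\deg_A(\mathbf{u}_i^+)$ lies strictly below $\deg_A(\mathbf{u}^+)$ in the monoid order; hence the move $\mathbf{u}$ is never applicable inside the fiber $\mathcal{F}_{\mathbf{u}_i}$, the path that $\MM$ provides from $\mathbf{u}_i^+$ to $\mathbf{u}_i^-$ automatically avoids $\mathbf{u}$, and translating it by $\mathbf{w}_{i-1}-\mathbf{u}_i^+$ connects $\mathbf{w}_{i-1}$ to $\mathbf{w}_i$. (One must also perform this substitution wherever $\mathbf{u}$ is applied in any other fiber, not only to join $\mathbf{u}^+$ to $\mathbf{u}^-$, but that is routine.)

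The backward direction is where the genuine gap lies, and you have in effect conceded it: the step you defer is the entire content of that implication, and I do not believe the extremal-path-plus-local-surgery plan can close it. A tight step $\mathbf{w}_{i-1}=\mathbf{u}_i^+$ occurs exactly when the move $\mathbf{u}_i\in\MM$ lies in the same fiber as $\mathbf{u}$ (a point of $\mathcal{F}_{\mathbf{u}}$ dominating $\mathbf{u}_i^+$ and of the same $A$-degree must equal $\mathbf{u}_i^+$, by $\ML(\MA)\cap\NN^n=\{\mathbf{0}\}$), i.e., exactly when the path crosses between connected components of the graph on $\mathcal{F}_{\mathbf{u}}$ whose edges are pairs of points with a nontrivial common factor. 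Such a crossing cannot be eliminated by rearranging a path inside the one minimal Markov basis $\MM$ you fixed: the argument that is actually needed is global over all minimal Markov bases. Namely, by the structure theorem for minimal Markov bases in terms of fiber components (\cite{CKT}, \cite{HS}), if $\mathbf{u}^+$ and $\mathbf{u}^-$ lay in different components one could exchange an edge of the spanning tree that $\MM$ induces on those components for the edge realized by $\mathbf{u}$, producing another minimal Markov basis containing $\mathbf{u}$ and contradicting $\mathbf{u}\notin\MM(\MA)$; it is this exchange, not an extremal choice of path, that forces the existence of a walk from $\mathbf{u}^+$ to $\mathbf{u}^-$ with every step strict. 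As written, the proposal does not establish the backward implication.
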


In fact, as shown in \cite{HaraThomaVladoiu14}, we have the following relationship between these decompositions
\[ \textrm{ proper conformal } \Rightarrow \textrm { proper strongly semiconformal }  \Rightarrow \textrm { proper semiconformal }   . \]

Let ${\bf u}\in\ML(\MA)$. The {\em fiber} $\mathcal{F}_{{\bf u}}$  is the set $\{{\bf t}\in\NN^n : {{\bf u}}^+-{\bf t}\in \ML(\MA)\}$. We have that $\mathcal{F}_{{\bf u}}$ is  a finite set, since $\ML({\MA})\cap \NN^n=\{\bf 0\}$.

\begin{Proposition}
\label{semiconformal}
Let ${\bf u}\in\ML({\MA})$. There is a bijection between the elements of the fiber $\mathcal{F}_{{\bf u}}$ and the ways that ${\bf u}$ can be written as semiconformal decomposition.
\end{Proposition}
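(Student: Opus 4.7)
The plan is to construct an explicit bijection between $\mathcal{F}_{\bf u}$ and the set of semiconformal decompositions of $\bf u$, using the reformulation noted just above the statement: ${\bf u} = {\bf v} +_{sc} {\bf w}$ if and only if ${\bf u}^+ \geq {\bf v}^+$ and ${\bf u}^- \geq {\bf w}^-$. Given this, the natural candidate pair of maps is
\[
\phi({\bf v}, {\bf w}) \,:=\, {\bf u}^+ - {\bf v}, \qquad \psi({\bf t}) \,:=\, ({\bf u}^+ - {\bf t},\ {\bf t} - {\bf u}^-).
\]
That $\phi\circ\psi$ and $\psi\circ\phi$ act as identities follows immediately from ${\bf u} = {\bf u}^+ - {\bf u}^-$ and ${\bf u} = {\bf v} + {\bf w}$, so the entire content of the proof lies in checking that both maps are well-defined.

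For $\phi$: assuming ${\bf u} = {\bf v} +_{sc} {\bf w}$, the membership ${\bf u}^+ - ({\bf u}^+ - {\bf v}) = {\bf v} \in \ML(\MA)$ is automatic, so what needs verification is ${\bf u}^+ - {\bf v} \in \NN^n$. I would check this coordinate by coordinate: at indices $i$ with ${\bf v}(i) \geq 0$, one has $({\bf u}^+ - {\bf v})(i) = {\bf u}^+(i) - {\bf v}^+(i) \geq 0$ directly from ${\bf u}^+ \geq {\bf v}^+$; at indices with ${\bf v}(i) < 0$, one has $({\bf u}^+ - {\bf v})(i) = {\bf u}^+(i) + {\bf v}^-(i) \geq 0$ trivially.

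For $\psi$: given ${\bf t} \in \mathcal{F}_{\bf u}$ and setting ${\bf v} := {\bf u}^+ - {\bf t}$ and ${\bf w} := {\bf t} - {\bf u}^-$, the membership ${\bf v} \in \ML(\MA)$ is by definition of the fiber, whence ${\bf w} = {\bf u} - {\bf v} \in \ML(\MA)$ as well. The two semiconformal inequalities ${\bf u}^+ \geq {\bf v}^+$ and ${\bf u}^- \geq {\bf w}^-$ follow from the nonnegativity ${\bf t} \in \NN^n$: for each $i$, either ${\bf v}^+(i) = 0 \leq {\bf u}^+(i)$ or ${\bf v}^+(i) = {\bf u}^+(i) - {\bf t}(i) \leq {\bf u}^+(i)$, and symmetrically either ${\bf w}^-(i) = 0 \leq {\bf u}^-(i)$ or ${\bf w}^-(i) = {\bf u}^-(i) - {\bf t}(i) \leq {\bf u}^-(i)$. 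There is no serious obstacle: the statement is essentially a repackaging of the iff-criterion for a semiconformal sum in terms of fiber elements of the form ${\bf u}^+ - {\bf v}$, and every step is a direct unfolding of definitions.
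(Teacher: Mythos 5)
Your proposal is correct and follows essentially the same route as the paper: both use the correspondence ${\bf t}\mapsto({\bf u}^+-{\bf t},\,{\bf t}-{\bf u}^-)$ and its inverse $({\bf v},{\bf w})\mapsto{\bf u}^+-{\bf v}$, with the only work being the well-definedness checks you carry out. Your write-up is in fact slightly more explicit than the paper's (which does not spell out the coordinatewise verification of ${\bf u}^+-{\bf v}\in\NN^n$ or that the two maps are mutually inverse), but there is no difference in substance.
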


\begin{proof} Let ${\bf t}\in\NN^n$ be in the fiber $\mathcal{F}_{{\bf u}}$. Then $ {{\bf u}}^+-{\bf t}\in \ML(\MA)$  as well as
${\bf t}-{{\bf u}}^- \in \ML(\MA)$, since both ${{\bf u}}^+, {{\bf u}}^-$ belong to $\mathcal{F}_{{\bf u}}$. Set ${\bf v}={{\bf u}}^+-{\bf t}$ and
${\bf w}={\bf t}-{{\bf u}}^-$. Then ${\bf u}={\bf v}+ {\bf w}$ and ${{\bf u}}^+\geq {{\bf v}}^+$ and ${{\bf u}}^-\geq {{\bf w}}^-$, since ${\bf t}\in\NN^n$.
This implies that ${\bf u}={\bf v}+_{sc} {\bf w}$.

For the converse, suppose we have a semiconformal decomposition ${\bf u}={\bf v}+_{sc} {\bf w}$, where  ${\bf u}, {\bf v}, {\bf w} \in \ML(\MA)$. Then ${\bf u}^+\geq {\bf v}^+$ and ${\bf u}^-\geq {\bf w}^-$,
which implies that ${\bf u}^+-{\bf v}={\bf u}^-+ {\bf w}\in \NN^n$. Note that ${\bf u}^+-({\bf u}^+-{\bf v})={\bf v}\in \ML(\MA)$. This implies that
${\bf u}^+-{\bf v}={\bf u}^-+ {\bf w}$ is an  element in the fiber $\mathcal{F}_{{\bf u}}$.
\end{proof}

 Proposition 4.13 in \cite{St} states that certain bases of a toric ideal behave well with respect to elimination.
 Let $\MB \subset \MA$, then for  the Graver bases we have $ \MG(\MB)=  \MG(\MA) \cap \ML(\MB)$.  The corresponding statement is true also for the universal Gr\"obner bases and
 for the circuits.

 However, the corresponding statement in general is not true for the Markov bases or the universal Markov bases i.e.
$$\MM(\MB) \neq \MM(\MA) \cap \ML(\MB) \text{ .}$$ For example, generic toric ideals  \cite{PS} are toric ideals generated by binomials with full support and all elements in a minimal
Markov basis are indispensable, which means that the universal Markov basis is a minimal Markov basis.
However, generic toric ideals are generated by binomials with full support therefore
it follows that $\MM(\MA) \cap \ML(\MB)=\emptyset $
if $\MB$ is a proper subset of $\MA$. This shows that $\MM(\MB) \neq \MM(\MA) \cap \ML(\MB) $ whenever the ideal $I_\MB$ is not zero,
for a generic toric ideal $I_\MA$.
On the contrary, Markov bases of Lawrence liftings behave well with respect to
certain eliminations, which is the content of the next Theorem.

For $A\in\MM_{m\times n}(\NN)$ as above and  $r\ge 2$, the $r$--th {\it Lawrence lifting} of $A$   is denoted by $A^{(r)}$ and is the $(rm+n)\times rn$ matrix

\[
A^{(r)}=\begin{array}{c} \overbrace{\quad\quad\quad \quad \quad\ \ }^{r-\textrm{times}} \\
 \left( \begin{array}{cccc}
\ A\ & 0 &   & 0 \\
0 &\ A\ &  & 0 \\
 & & \ddots &  \\
0 & 0 &  &\ A\ \\
I_n &I_n& \cdots & I_n
\end{array} \right)\end{array},\]
see \cite{SS}. We write $\ML(\MA^{(r)})$ for $\Ker_{\ZZ}(A^{(r)})$ and  identify an element of $\mathcal{L}(\MA^{(r)})$ with  an $r\times n $ matrix:   each row of this matrix corresponds to an element of $\mathcal{L} (\MA)$ and the sum of its rows is zero. The   {\em type} of an element of $\mathcal{L}(\MA^{(r)})$ is the number of  nonzero rows of this matrix. The {\em Markov complexity}, $m(\MA)$, is the largest type of any vector in the universal Markov basis of $A^{(r)}$ as $r$ varies.
According to \cite[Theorem 3.3]{HaraThomaVladoiu15}, since  $\ML({\MA})\cap \NN^n=\{\bf 0\}$ all minimal Markov bases
of $\MA^{(r)}$ have the same complexity for $r \geq 2$.

Let $\MB \subset \MA=\{a_1, a_2, \cdots, a_n\}$ and after renumeration $\MB=\{a_1, a_2, \cdots, a_s\}$. Let ${\bf u}\in \mathcal{L}(\MB^{(r)})$, then we denote by $\sigma ({\bf u})$ an element
of $\mathcal{L}(\MA^{(r)})$ which when is written as an  $r\times n$  matrix the first $s$ columns are the columns of ${\bf u}$ and the last $n-s$ columns are zero columns.
Let ${\bf v}\in \mathcal{L}(\MA^{(r)})$, then we denote by $\pi ({\bf v})$ an  $r\times s$  matrix with columns the first $s$ columns of ${\bf v}$.
In general $\pi ({\bf v})\not \in \mathcal{L}(\MB^{(r)}) $, but if the last $n-s$ columns of ${\bf v}$ are zero then $\pi ({\bf v}) \in \mathcal{L}(\MB^{(r)}) $.

For simplicity, we will denote  $\sigma (\MM(\MB^{(r)}))$ by $\MM(\MB^{(r)})$ and $\sigma (\ML(\MB^{(r)}))$ by $\ML(\MB^{(r)})$.

 \begin{Theorem}
 \label{restriction}
For the universal Markov bases $\MM(\MA^{(r)})$ and $\MM(\MB^{(r)})$ of $\MA^{(r)}$ and $\MB^{(r)}$ respectively, it holds that $\MM(\MB^{(r)}) = \MM(\MA^{(r)}) \cap \ML(\MB^{(r)})$.
\end{Theorem}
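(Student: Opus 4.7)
The plan is to translate the set equality, via Proposition~\ref{universalmarkov}, into the statement: a nonzero ${\bf u}\in\ML(\MB^{(r)})$ has a proper strongly semiconformal decomposition in $\ML(\MB^{(r)})$ if and only if $\sigma({\bf u})$ has a proper strongly semiconformal decomposition in $\ML(\MA^{(r)})$. I would then prove the two implications separately, with the direction $\ML(\MB^{(r)})\to\ML(\MA^{(r)})$ being essentially formal and the converse requiring the special column-sum-zero property that Lawrence liftings enjoy.

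For the easy direction ($\supseteq$ in the theorem): if ${\bf u}=_{ssc}{\bf u}_1+\cdots+{\bf u}_l$ is a proper strongly semiconformal decomposition in $\ML(\MB^{(r)})$, then, since $\sigma$ just pads each $r\times s$ matrix with $n-s$ zero columns, it commutes with addition and preserves positive/negative parts coordinatewise. Hence $\sigma({\bf u})=_{ssc}\sigma({\bf u}_1)+\cdots+\sigma({\bf u}_l)$ is a proper strongly semiconformal decomposition in $\ML(\MA^{(r)})$.

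For the hard direction ($\subseteq$), suppose $\sigma({\bf u})=_{ssc}{\bf v}_1+\cdots+{\bf v}_l$ is a proper strongly semiconformal decomposition in $\ML(\MA^{(r)})$. I will show by induction on $i$ that each ${\bf v}_i$, viewed as an $r\times n$ matrix, has zero entries in its last $n-s$ columns, so that ${\bf v}_i=\sigma(\pi({\bf v}_i))$ with $\pi({\bf v}_i)\in\ML(\MB^{(r)})$. The key structural observation is that every element of $\ML(\MA^{(r)})$ has each of its columns summing to zero (this is precisely the bottom block $[I_n\,|\,\cdots\,|\,I_n]$ of the Lawrence matrix). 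For $i=1$, the strongly semiconformal condition gives $\sigma({\bf u})^+>{\bf v}_1^+$; since $\sigma({\bf u})^+$ is zero in the last $n-s$ columns, so is ${\bf v}_1^+$, and then the column-sum-zero property forces ${\bf v}_1$ itself to vanish on those columns. For $i\geq 2$, the condition $\sigma({\bf u})^+>(\sum_{j<i}{\bf v}_j)+{\bf v}_i^+$ together with the inductive hypothesis that ${\bf v}_1,\dots,{\bf v}_{i-1}$ have zeros in the last $n-s$ columns again forces ${\bf v}_i^+$, and hence ${\bf v}_i$, to be zero there. Applying $\pi$ to the whole decomposition then yields ${\bf u}=_{ssc}\pi({\bf v}_1)+\cdots+\pi({\bf v}_l)$ as a proper strongly semiconformal decomposition in $\ML(\MB^{(r)})$; properness is preserved because zero entries in the last $n-s$ columns of a ${\bf v}_i$ mean no nonzero entry is lost under $\pi$.

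The main obstacle is this support-localization step, and it is exactly the Lawrence structure (column sums zero) that makes it work; without it the statement fails, as the generic toric ideal counterexample discussed just before the theorem demonstrates. The argument is thus essentially a pigeonhole on nonpositive entries whose column sum is zero, combined with the additive definition of strongly semiconformal decompositions.
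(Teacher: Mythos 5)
Your proposal is correct and follows essentially the same route as the paper: both directions reduce via Proposition~\ref{universalmarkov} to transporting proper strongly semiconformal decompositions along $\sigma$ and $\pi$, with the key step being the inductive argument that the strongly semiconformal inequalities together with the column-sum-zero property of $\ML(\MA^{(r)})$ force every summand to vanish on the last $n-s$ columns. The paper's proof is the same pigeonhole-on-columns argument, just written as a proof by contradiction rather than as an explicit equivalence.
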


 \begin{proof}
We will first show that $\MM(\MB^{(r)}) \subseteq \MM(\MA^{(r)}) \cap \ML(\MB^{(r)})$.
  Let ${\bf u}$ be an element of the universal Markov basis $\MM(\MB^{(r)})$, then  ${\bf u} \in \mathcal{L} (\MB^{(r)})$.

   Suppose that  $\sigma({\bf u}) \notin \MM(\MA^{(r)})$, then by Proposition~\ref{universalmarkov}, there is a proper strongly semiconformal decomposition of $\sigma({\bf u})$
 $$\sigma({\bf u})=_{ssc}{\bf u}_1+\cdots+{\bf u}_l$$
 where each ${\bf u}_i$ is an element of the lattice $\mathcal{L} (\MA^{(r)})$. From the way the element $\sigma({\bf u})$ is defined, the last  $n-s$ columns of the matrix $\sigma({\bf u})$ are zero.
  We claim that all ${\bf u}_j$ also have the last $n-s$ columns equal to zero.
Let us consider one element on the $j$-th column of the last $n-s$ columns of ${\bf u}_1$ that is non-zero.
Since the sum of the entries of each column of the matrix ${\bf u}_1$ are zero, there exists at least one element on the $j$-th column of ${\bf u}_1$ which is positive.
Suppose this element is the element $({\bf u}_1)_{ij}$ which lies on the $i$-th row and $j$-th column. But then ${\bf u}^+>{\bf u}_1^+$ and ${\bf u}_{ij}=0 < ({\bf u}_1)_{ij}$, which is a contradiction.
Therefore, the whole column $j$ would be zero and subsequently each of the $n-s$ last columns would be zero.

We will continue by induction on the number $t$ of elements ${\bf u}_1, \cdots, {\bf u}_{t}$ for which this happens. Suppose that
for some $t$ the last $n-s$ columns of the elements ${\bf u}_1, \cdots, {\bf u}_{t-1}$ are zero.
Let us consider one element on the $j$-th column  of ${\bf u}_t$ that is non-zero, where $s+1\leq j\leq n$.
Since the sum of the entries of each column of the matrix ${\bf u}_t$ are zero, there exists at least one element on the $j$-th column  of ${\bf u}_t$ which is positive.
Suppose this element is the element $({\bf u}_t)_{ij}$ which lies on the $i$-th row and $j$-th column. But then
${\bf u}^+ >(\sum_{j=1}^{t-1} {\bf u}_j)+{\bf u}_t^+ $ and ${\bf u}_{ij}=0 < ({\bf u}_t)_{ij}$, which is a contradiction.

Therefore, all ${\bf u}_j$ have  the last $n-s$ columns equal to zero, which means that $\pi({\bf u}_j)\in \MM(\MB^{(r)})$ for $j=1,\cdots, l$. Thus,
$
 {\bf u}  =_{ssc}\pi ({\bf u}_1)+ \cdots + \pi ({\bf u}_l),
$ which according to Proposition~\ref{universalmarkov} is a contradiction, since ${\bf u}\in \MM(\MB^{(r)})$ and as such should have no proper strongly semiconformal decomposition.

To prove the direction $\MM(\MB^{(r)}) \supset \MM(\MA^{(r)}) \cap \ML(\MB^{(r)})  \text{ ,}$ let ${\bf v} \in \MM(\MA^{(r)}) \cap \ML(\MB^{(r)})$.
If we assume that $\pi({\bf v}) \notin \MM(\MB^{(r)})$, then there exists a proper strongly semiconformal decomposition
$\pi ({\bf v}) =_{ssc} {\bf v}_1 + \cdots +{\bf v}_l$ with each ${\bf v}_i \in \mathcal{L}(\MB^{(r)})$.
But then ${\bf v} =_{ssc} \sigma ({\bf v}_1) + \cdots +\sigma ({\bf v}_l)$ with each $\sigma ({\bf v}_i)\in \mathcal{L}(\MA^{(r)})$. According to Proposition~\ref{universalmarkov}, this is a contradiction since ${\bf v} \in \MM(\MA^{(r)})$.
\end{proof}

As an application, we show that if the Markov complexity $m(\MA)$ of a monomial curve $\MA$ is equal to 2,
then for any subset of three elements $\MB \subset \MA$ the corresponding toric ideal $I_{\MB}$ is complete intersection.

\begin{Corollary}\label{type2} If a monomial curve $\MA= (l_1,l_2, \cdots, l_m)$ in $\mathbb{A}^m$ has Markov complexity 2, then for any $i,j,k$ in $\{1, 2, \cdots, m\}$
the monomial curve $\MB =(l_i, l_j, l_k)$ in $\mathbb{A}^3$ is complete intersection.
\end{Corollary}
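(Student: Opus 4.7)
The plan is to combine Theorem~\ref{restriction} with the known trichotomy for monomial curves in $\mathbb{A}^3$ from \cite{HaraThomaVladoiu14} recalled in the introduction (namely: such a curve has Markov complexity $2$ if it is a complete intersection and $3$ otherwise). The strategy is to show that the Markov complexity bound transfers from $\MA$ down to any three-element subset $\MB$, and then conclude via the aforementioned dichotomy.

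First I fix any triple $\{i,j,k\} \subseteq \{1,\dots,m\}$ and set $\MB = (l_i,l_j,l_k) \subset \MA$. For every $r \geq 2$, Theorem~\ref{restriction} gives
\[
\MM(\MB^{(r)}) \;=\; \MM(\MA^{(r)}) \cap \ML(\MB^{(r)}).
\]
I would next observe that the type of a vector is preserved under the embedding $\sigma$ and the projection $\pi$ used in Theorem~\ref{restriction}: adjoining or deleting the trailing zero columns that correspond to the coordinates in $\MA \setminus \MB$ does not change the number of nonzero rows of the $r \times n$ (resp.\ $r \times 3$) matrix representation. Hence every element of $\MM(\MB^{(r)})$ has the same type as its image in $\MM(\MA^{(r)}) \cap \ML(\MB^{(r)})$.

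Since $m(\MA) = 2$, by definition every vector in $\MM(\MA^{(r)})$ has type at most $2$ for all $r \geq 2$; by the identification above the same bound holds for $\MM(\MB^{(r)})$. Taking the supremum over $r$ yields $m(\MB) \leq 2$. Combined with the fact that any non-trivial monomial curve has Markov complexity at least $2$ (the rows of a nonzero element of $\ML(\MB^{(r)})$ sum to zero, so at least two rows must be nonzero), we conclude $m(\MB) = 2$, unless the toric ideal $I_{\MB}$ is zero in which case $I_{\MB}$ is vacuously a complete intersection.

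Finally I invoke the result of \cite{HaraThomaVladoiu14}: a monomial curve in $\mathbb{A}^3$ of Markov complexity $2$ is necessarily a complete intersection, which is exactly what is claimed. The only subtlety I anticipate is the verification that $\sigma$ and $\pi$ preserve the type, but this is immediate from the block structure of the Lawrence lifting, so there is no substantive obstacle; the corollary is essentially a direct consequence of Theorem~\ref{restriction} and the $\mathbb{A}^3$ classification.
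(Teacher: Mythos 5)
Your proposal is correct and follows essentially the same route as the paper: it combines Theorem~\ref{restriction} with the classification of Markov complexity for monomial curves in $\mathbb{A}^3$ from \cite{HaraThomaVladoiu14}, the only cosmetic difference being that the paper argues by contradiction (if some $\MB$ were not a complete intersection, a type~$3$ element of $\MM(\MB^{(r)})$ would lift to $\MM(\MA^{(r)})$), while you run the same implication in the contrapositive direction.
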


\begin{proof} Suppose that there exist $i,j,k$ in $\{1, 2, \cdots, m\}$ such that
the monomial curve $\MB =(l_i, l_j, l_k)$ in $\mathbb{A}^3$ is not complete intersection.

 Then by Theorem 2.6 in \cite{HaraThomaVladoiu14} we know that $m(\MB)=3$.
This means that for any $r$-th Lawrence lifting  $r\geq 3$, $\MB^{(r)}$ has type $3$ elements inside the universal Markov basis
$\MM(\MB^{(r)})$. By Theorem~\ref{restriction}, there is a type 3 element inside $\MM(\MA^{(r)})$ as well.
This means that the Markov complexity is $m(\MA)\geq 3$. A contradiction.
\end{proof}

\begin{Remark} {\rm We note that the converse of Corollary \ref{type2} is not true.
In the next sections, we will give examples of monomial curves  $\MA= (l_1,l_2, \cdots, l_m)$ in $\mathbb{A}^m$ with
arbitrary large Markov complexity, such that for any $i,j,k$ in $\{1, 2, \cdots, m\}$ the monomial curve
$\MB =(l_i, l_j, l_k)$ in $\mathbb{A}^3$ is complete intersection.}
\end{Remark}

\section{The family of monomial curves $\MA_n= \{ 1, n, n^2-n, n^2-1\}$}
\label{Section: MonomialCurve}

In this section we give the guiding example of the paper; a family of monomial curves $\MA_n$ in $\mathbb{A}^4$
which has the special structure that all members in the family are complete intersections,
but also for each one $\MA_n$, any curve in $\mathbb{A}^3$ obtained
by taking any three elements of the curve $\MA_n$ is also complete intersection.
We will also present here some properties governing some semiconformal sums associated to these monomial curves.

Let us consider the example of the monomial curve $\MA_n= \{ 1, n, n^2-n, n^2-1\}$.
 For this curve, there is always the following element of type $n$ in $\mathcal{L}(\MA_n^{(n)})$:

\[
{\bf u}= \left( \begin{array}{cccc}
1 & -1 & -1  & 1 \\
1 & -1 & -1  & 1 \\
& \ddots & &  \\
1 & -1 & -1  & 1 \\
0 & 0 & n+1 & -n \\
2-n &n-2& -3 & 2
\end{array} \right) \text{,} \]
since every row is in $\mathcal{L}(\MA_n)$ and the sum of each column is zero.
Note that the first $n-2$ rows are of the form $(1,-1,-1,1)$, while the last two are
$(0, 0, n+1, -n)$ and
$(n-2, 2-n, -3, 2)$.
The following Lemmas study the ways that two of the above elements of $\mathcal{L}(\MA_n)$
can be written semiconformally under some special conditions. Note that for big $n$, there are thousands of elements in the fibers
of the above elements, which according to Proposition \ref{semiconformal}, means that there are thousand of different ways of writing these elements as semiconformal sums.

\begin{Lemma}
\label{1}
Consider the element $u=(1,-1,-1,1) \in \ML(\MA_n)$. Then $u$ can be written as a semiconformal decomposition
$u=v+_{sc}w$ with the first element of the first term $v_1=1$ in exactly the following two ways
\begin{eqnarray}
(1, -1, -1,1) &=& (1, -1, -1,1) +_{sc} (0,0,0,0)  \\
(1, -1, -1,1) &=& (1, -n, 0,1) +_{sc} (0,n-1,-1,0) \text{.}
\end{eqnarray}
\end{Lemma}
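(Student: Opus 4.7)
The plan is to convert the condition $u = v +_{sc} w$ with $v_1 = 1$ into coordinate-wise bounds on $v$, parametrize $v$ by a small number of integers, and then use the lattice condition $v \in \ML(\MA_n)$ to pin down a finite list of solutions.

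Using the characterization $u = v +_{sc} w$ iff $u^+ \geq v^+$ and $u^- \geq w^-$ recalled in Section~\ref{section:preliminaries}, together with $u^+ = (1,0,0,1)$ and $u^- = (0,1,1,0)$, every candidate $v$ with $v_1 = 1$ must have the form $v = (1, -a, -b, c)$ with $a, b$ nonnegative integers and $c \in \ZZ$, $c \leq 1$. The complement $w = u - v = (0, a - 1, b - 1, 1 - c)$ then automatically satisfies $u^- \geq w^-$, so the only remaining constraint is $v \in \ML(\MA_n)$; since $u \in \ML(\MA_n)$, this forces $w \in \ML(\MA_n)$ as well. Expanding $v \in \ML(\MA_n)$ and collecting terms produces the single Diophantine equation
\[
(c - b)\, n^2 + (b - a)\, n + (1 - c) = 0.
\]

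The key step, and the only one that is not routine, is to reduce this equation modulo $n$: it immediately forces $c \equiv 1 \pmod{n}$, and combined with $c \leq 1$ leaves only $c = 1$ or $c = 1 - kn$ with $k \geq 1$. In the latter case, substituting back and solving for $a$ yields $a = n - k(n^2 - 1) - b(n - 1)$, which is strictly negative for every $b \geq 0$ whenever $k \geq 1$ and $n \geq 2$; so this branch is excluded.

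With $c = 1$ the equation collapses to $(1 - b)n + (b - a) = 0$, i.e.\ $a = n - b(n - 1)$. Nonnegativity of $a$ together with $n \geq 3$ forces $b(n - 1) \leq n$, hence $b \in \{0, 1\}$. The value $b = 1$ gives $a = 1$, producing the trivial decomposition $u = u +_{sc} 0$, while $b = 0$ gives $a = n$, producing $(1, -1, -1, 1) = (1, -n, 0, 1) +_{sc} (0, n - 1, -1, 0)$; these are exactly the two decompositions in the statement. The main obstacle is spotting that the mod-$n$ reduction traps $c$ so quickly; after that, everything reduces to a one-variable linear Diophantine check with tight sign constraints.
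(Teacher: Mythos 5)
Your proof is correct and follows essentially the same route as the paper's: both reduce the semiconformal condition to a single linear Diophantine equation with sign/bound constraints (the paper parametrizes by the fiber element $(\alpha,\beta,\gamma,\delta)=u^{+}-v$ and uses the degree equation $\alpha+\beta n+\gamma(n^2-n)+\delta(n^2-1)=n^2$, which is just a change of variables from your equation $(c-b)n^2+(b-a)n+(1-c)=0$), and both eliminate the fourth coordinate by a divisibility-by-$n$ argument before enumerating the remaining solutions. Your explicit use of $n\geq 3$ when bounding $b$ is a welcome touch of care, since for $n=2$ an extra solution would appear in both arguments.
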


\begin{proof}
Suppose that $u=v+_{sc}w$
for some  vectors $v,w \in\ML(\MA_n)$. Then Proposition~\ref{semiconformal}
implies that  $u^{+} -v= u^{-}+w = (\alpha, \beta, \gamma, \delta)\in \NN^4$.
Therefore, the semiconformal sum $u=v+_{sc} w$ is alternatively written as

$$ \begin{array}{ll}
u& = (u^{+} - (\alpha, \beta, \gamma, \delta) ) +_{sc} ((\alpha, \beta, \gamma, \delta) - u^{-})\\
& = (1-\alpha, -\beta, -\gamma, 1-\delta) ) +_{sc} (\alpha, \beta-1, \gamma-1, \delta)
\end{array}$$

Since the element $(\alpha, \beta, \gamma, \delta) \in \mathcal{F}_{u}$,
we have that $\deg_{A} {\bf x}^{u} = \deg_{A} {\bf x}^{(\alpha, \beta, \gamma, \delta)}$. This implies that
$$
\alpha + \beta n + \gamma (n^2-n) + \delta (n^2-1) = n^2.
$$
 We are interested in establishing what happens when $\alpha = 0$, since we are in the case that $v_1=1$. In this case
 $
 \beta n + \gamma (n^2-n) + \delta (n^2-1) = n^2
$
where $\beta ,\gamma , \delta \in \NN$. Then $\delta $ can take the values of 1 and 0. Suppose that $\delta =1$ then
$
 \beta n + \gamma (n^2-n)  = 1
$ which is a contradiction, since $n\ge 2$ and $n$ divides $1$. Therefore $\delta =0$. Then $ \beta n + \gamma (n^2-n)  = n^2,$ which has only two solutions: $(\beta ,\gamma)=(1,1)$ or $(\beta ,\gamma)=(n,0)$.
Therefore
$(\alpha, \beta, \gamma, \delta)= (0,1,1,0)$ or $(\alpha, \beta, \gamma, \delta)= (0,n,0,0)$ and this gives us only two cases for the semiconformal decomposition
$
u = (1, -1, -1, 1) +_{sc} (0, 0, 0, 0)
$
or
$
u = (1, -n, 0, 1) +_{sc} (0, n-1, -1, 0) \text{.}
$
\end{proof}

\begin{Lemma}
\label{2}

Consider the element $u=(1,-1,-1,1) \in \ML(\MA_n)$. Then $u$ can be written as a semiconformal decomposition
$u=v+_{sc}w$ with the second element of the second term $w_2=-1$ in exactly the following three ways
\begin{eqnarray}
(1, -1, -1,1) &=&(0,0,0,0) +_{sc} (1, -1, -1,1)\\
(1, -1, -1,1) &=&(1-n,0,-1,1) +_{sc} (n,-1,0,0)\\
(1, -1, -1,1) &=&(1-n^2,0,0,1) +_{sc} (n^2,-1,-1,0) \text{.}
\end{eqnarray}
\end{Lemma}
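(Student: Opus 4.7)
The plan is to mirror the strategy used in the proof of Lemma~\ref{1}, but now imposing the condition on the second coordinate of the second summand rather than on the first coordinate of the first summand. By Proposition~\ref{semiconformal}, any semiconformal decomposition $u=v+_{sc}w$ corresponds to some element $(\alpha,\beta,\gamma,\delta)\in\NN^4$ of the fiber $\mathcal{F}_u$, via $v=u^+-(\alpha,\beta,\gamma,\delta)$ and $w=(\alpha,\beta,\gamma,\delta)-u^-$. Since $u^+=(1,0,0,1)$ and $u^-=(0,1,1,0)$, this gives
\[
v=(1-\alpha,-\beta,-\gamma,1-\delta), \qquad w=(\alpha,\beta-1,\gamma-1,\delta),
\]
and membership in $\mathcal{F}_u$ is equivalent to the single Diophantine constraint
\[
\alpha+\beta n+\gamma(n^2-n)+\delta(n^2-1)=n^2, \qquad \alpha,\beta,\gamma,\delta\in\NN.
\]

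Next I would translate the hypothesis $w_2=-1$ into a condition on the fiber element. Since $w_2=\beta-1$, the condition forces $\beta=0$, which reduces the Diophantine equation to
\[
\alpha+\gamma(n^2-n)+\delta(n^2-1)=n^2.
\]
At this point the proof is a short enumeration. I would split on the value of $\delta$: when $\delta\geq 2$ the left-hand side already exceeds $n^2$, so $\delta\in\{0,1\}$. If $\delta=1$, the equation becomes $\alpha+\gamma(n^2-n)=1$, and since $n^2-n\geq 2$ this forces $\gamma=0$ and $\alpha=1$, yielding the decomposition $(0,0,0,0)+_{sc}(1,-1,-1,1)$. If $\delta=0$, I would bound $\gamma\leq 1$ (using $2(n^2-n)>n^2$) and check the two remaining cases: $\gamma=1$ gives $\alpha=n$, producing $(1-n,0,-1,1)+_{sc}(n,-1,0,0)$, while $\gamma=0$ gives $\alpha=n^2$, producing $(1-n^2,0,0,1)+_{sc}(n^2,-1,-1,0)$.

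Finally I would observe that the three triples $(\alpha,\gamma,\delta)\in\{(1,0,1),(n,1,0),(n^2,0,0)\}$ are distinct, so by the bijection of Proposition~\ref{semiconformal} they correspond to three distinct semiconformal decompositions, and they are exactly the three listed in the statement. There is no real obstacle here beyond a careful and exhaustive case analysis of the linear Diophantine equation; the only subtlety is the crude but correct estimate $2(n^2-n)>n^2$ (valid for $n\geq 3$, matching the regime in which $\MA_n$ is actually a $4$-element set of distinct positive integers), which keeps the enumeration finite and short.
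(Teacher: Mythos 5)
Your proposal is correct and follows essentially the same route as the paper: parametrize the semiconformal decompositions by fiber elements $(\alpha,\beta,\gamma,\delta)$ via Proposition~\ref{semiconformal}, impose $\beta=0$ from $w_2=-1$, and enumerate the solutions of the resulting Diophantine equation by cases on $\delta$. The only (harmless) difference is that you make explicit the bound $2(n^2-n)>n^2$ justifying $\gamma\leq 1$ when $\delta=0$, which the paper leaves implicit.
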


\begin{proof}
Suppose that $u=v+_{sc}w$
for some  vectors $v,w\in\ML(\MA_n)$. Then by Proposition~\ref{semiconformal}
the semiconformal sum $u=v+_{sc} w$, is alternatively written as
\begin{eqnarray}
u &= &(u^{+} - (\alpha, \beta, \gamma, \delta) ) +_{sc} ((\alpha, \beta, \gamma, \delta) - u^{-})\\
 & = & (1-\alpha, -\beta, -\gamma, 1-\delta) ) +_{sc} (\alpha, \beta-1, \gamma-1, \delta)
\end{eqnarray}
where the element $(\alpha, \beta, \gamma, \delta) \in \mathcal{F}_{u}$. We have that $\deg_{A} {\bf x}^{u} = \deg_{A} {\bf x}^{(\alpha, \beta, \gamma, \delta)}$. This means that
$$
\alpha + \beta n + \gamma (n^2-n) + \delta (n^2-1) = n^2.
$$
 We are interested in establishing what happens when $\beta = 0$, since we are in the case that $w_2=-1$. In this case
$
 \alpha + \gamma (n^2-n) + \delta (n^2-1) = n^2
$
where $\alpha ,\gamma , \delta \in \NN^n$. Then $\delta $ can take the values of 1 and 0. Indeed, if $\delta \geq 2$ then
$
 \alpha + \gamma (n^2-n) + \delta (n^2-1) > n^2
$ which is contradiction. If $\delta =1$ then $\alpha =1$ and $\gamma =0$, therefore
$(\alpha, \beta, \gamma, \delta)= (1,0,0,1)$. In the case that $\delta =0$,
we get $ \alpha  + \gamma (n^2-n)  = n^2,$ which has only two solutions, namely
 $(\alpha, \beta, \gamma, \delta)= (n,0,1,0)$ and $(\alpha, \beta, \gamma, \delta)= (n^2,0,0,0)$.

 Therefore, by equation (7) we only have the following three cases for the semiconformal decomposition
$u = (0,0,0,0) +_{sc} (1, -1, -1,1)$ or $u =(1-n,0,-1,1) +_{sc} (n,-1,0,0)$ or $u =(1-n^2,0,0,1) +_{sc} (n^2,-1,-1,0) \text{.}$
\end{proof}

\begin{Lemma}
\label{3}
Consider the element $u=(2-n,n-2,-3,2) \in \ML(\MA_n)$.
If $u$ can be written as a semiconformal decomposition $u=v+_{sc}w$
with the first entries $v_1, w_1$ non-positive and the second entries $v_2, w_2$ non-negative,
then $v={\bf 0}$ or $w={\bf 0}$.
\end{Lemma}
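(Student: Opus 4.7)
The plan is to apply Proposition~\ref{semiconformal} and parameterize all semiconformal decompositions of $u=(2-n,n-2,-3,2)$ by the elements of the fiber $\mathcal{F}_u$. Writing $t=(\alpha,\beta,\gamma,\delta)\in\mathcal{F}_u$, one has
\[
v=u^+-t=(-\alpha,\,n-2-\beta,\,-\gamma,\,2-\delta),\qquad w=t-u^-=(\alpha-(n-2),\,\beta,\,\gamma-3,\,\delta),
\]
so $v=\mathbf{0}$ corresponds to $t=u^+=(0,n-2,0,2)$ and $w=\mathbf{0}$ corresponds to $t=u^-=(n-2,0,3,0)$. The goal is to show that these are the only values of $t$ compatible with the sign constraints of the statement.

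The constraints $v_1,w_1\le 0$ and $v_2,w_2\ge 0$ translate, using $\alpha,\beta\in\NN$, to the bounds $0\le\alpha\le n-2$ and $0\le\beta\le n-2$. Moreover, since $t\in\mathcal{F}_u$ means $\deg_A \mathbf{x}^t=\deg_A\mathbf{x}^{u^+}$, I would compute $\deg_A\mathbf{x}^{u^+}=(n-2)n+2(n^2-1)=3n^2-2n-2$, giving the Diophantine equation
\[
\alpha+\beta n+\gamma(n^2-n)+\delta(n^2-1)=3n^2-2n-2,\qquad \alpha,\beta,\gamma,\delta\in\NN.
\]

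Next I would run a case analysis on $\delta$. The cases $\delta\ge 3$ are eliminated immediately because $\delta(n^2-1)$ already exceeds $3n^2-2n-2$. For $\delta=2$ the equation reduces to $\alpha+\beta n+\gamma n(n-1)=n(n-2)$; since $\gamma\ge 1$ forces the left side $\ge n^2-n>n^2-2n$, one must have $\gamma=0$, and then the bounds $\alpha\le n-2$, $\beta\le n-2$ force $(\alpha,\beta)=(0,n-2)$, i.e.\ $t=u^+$. For $\delta=0$, the remaining equation is $\alpha+\beta n+\gamma(n^2-n)=3n^2-2n-2$; using that $\alpha+\beta n\le (n-2)+(n-2)n=n^2-n-2$, I would show that only $\gamma=3$ is feasible, and it yields $(\alpha,\beta)=(n-2,0)$, i.e.\ $t=u^-$. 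The case $\delta=1$ I expect to be the mildly fiddly one: the equation becomes $\alpha+\beta n+\gamma(n^2-n)=2n^2-2n-1$, and one checks that $\gamma=0,1$ each exceed the maximum of $\alpha+\beta n$ by a small amount while $\gamma\ge 2$ makes the left side too large, so no solution exists.

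Since the only admissible fibers are $u^+$ and $u^-$, the bijection of Proposition~\ref{semiconformal} forces $v=\mathbf{0}$ or $w=\mathbf{0}$, establishing the claim. The only real obstacle is the $\delta=1$ subcase, where both the $\gamma$ too small and $\gamma$ too large sides of the estimate need to be handled, but in each instance the gap is easy to quantify with the bound $\alpha+\beta n\le n^2-n-2$ coming from the sign constraints on the first two coordinates.
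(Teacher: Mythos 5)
Your proposal is correct and follows essentially the same route as the paper: parameterize all semiconformal decompositions by fiber elements $(\alpha,\beta,\gamma,\delta)\in\mathcal{F}_u$ via Proposition~\ref{semiconformal}, derive the degree equation $\alpha+\beta n+\gamma(n^2-n)+\delta(n^2-1)=3n^2-2n-2$ together with the bounds $0\le\alpha,\beta\le n-2$, and run the case analysis $\delta\in\{0,1,2\}$ to conclude that only $t=u^+$ and $t=u^-$ survive. The only cosmetic difference is that the paper dispatches the three cases using the congruence $\alpha-\delta\equiv -2 \pmod n$ (which kills $\delta=1$ instantly), whereas you use direct size estimates on $\alpha+\beta n$; both work.
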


\begin{proof}
Suppose that $u=v+_{sc}w$ for some nonzero vectors $v,w \in\ML(\MA_n)$.
The semiconformal sum $u=v+_{sc} w$, is alternatively written as
\begin{eqnarray}
u& = & (u^{+} - (\alpha, \beta, \gamma, \delta) ) +_{sc} ((\alpha, \beta, \gamma, \delta) - u^{-})\\
& = & (-\alpha, (n-2) -\beta, -\gamma, 2-\delta) ) +_{sc} (\alpha - (n-2), \beta, \gamma-3, \delta)
\end{eqnarray}
 Since $(\alpha, \beta, \gamma, \delta)$ belongs to the fiber $\mathcal{F}_{u}$, we have that $\deg_{A} {\bf x}^{u} = \deg_{A} {\bf x}^{(\alpha, \beta, \gamma, \delta)}$. This means that
\begin{eqnarray}{c}
\label{DegreeOfFibreLastRow}
\alpha + \beta n + \gamma (n^2-n) + \delta (n^2-1) = 3n^2 - 2n -2
\end{eqnarray}
which also gives $\alpha- \delta\equiv -2 \mod n$. The initial conditions about the entries $v_1, w_1$ and the entries $v_2, w_2$ imply that $0\leq \alpha \leq n-2$ and $0\leq \beta \leq n-2$.

 Noting that $0 \leq \delta \leq 2$, we distinguish three cases for the value of $\delta$.
 In the case that $\delta = 2$, we have that $\alpha\equiv 0 \mod n$ which together with $0\leq \alpha \leq n-2$ imply that $\alpha =0$. Then equation (\ref{DegreeOfFibreLastRow}) gives $ \beta n + \gamma (n^2-n)  = n^2 - 2n$, which in turn implies that $\gamma =0$ and $\beta =n-2$. Therefore
$(\alpha, \beta, \gamma, \delta)= (0,n-2,0,2)$ obtaining the semiconformal decomposition
$
u = (0, 0, 0, 0) +_{sc} (2-n, n-2, -3, 2) \text{.}
$

Now if $\delta = 1$, we get that $\alpha\equiv -1 \mod n$ and together with $0\leq \alpha \leq n-2$ gives a contradiction. \\
Finally, if $\delta = 0$, then $\alpha\equiv -2 \mod n$ which together with $0\leq \alpha \leq n-2$ imply that $\alpha =n-2$. Then equation (\ref{DegreeOfFibreLastRow}) becomes
 $ n-2+ \beta n + \gamma (n^2-n)  = 3n^2 - 2n -2$ which in turn gives  $ \beta + \gamma (n-1)  = 3n - 3$. This means that $\beta$ is a multiple of $n-1$ and since $0\leq \beta \leq n-2$ the only option is for $\beta =0$ and $\gamma =3$. Therefore
$(\alpha, \beta, \gamma, \delta)= (n-2,0,3,0)$ gives us the semiconformal decomposition
$
u =  (2-n, n-2, -3, 2) +_{sc} (0, 0, 0, 0) \text{.}
$
\end{proof}

\section{Markov complexity of monomial curves}
\label{Section: MainResult}

In this section, we prove the main result of this paper regarding the unboundedness of the Markov complexity
of monomial curves in $\mathbb{A}^m, m\geq 4$. We use the properties of semiconformal decompositions for the special
monomial curve $\MA_n= \{ 1, n, n^2-n, n^2-1\}$ shown in Section~\ref{Section: MonomialCurve}, as well as
Theorem~\ref{restriction} regarding the good behaviour of  Markov bases of higher Lawrence liftings
with respect to elimination.

\begin{Theorem}
\label{large} Monomial curves in $\mathbb{A}^4$ may have arbitrary large Markov complexity.
\end{Theorem}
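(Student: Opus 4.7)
The plan is to exhibit, for each $n \ge 3$, the monomial curve $\MA_n = (1, n, n^2-n, n^2-1)$ in $\mathbb{A}^4$ together with the explicit type-$n$ vector ${\bf u} \in \ML(\MA_n^{(n)})$ constructed in Section~\ref{Section: MonomialCurve}, and prove that ${\bf u}$ belongs to the universal Markov basis $\MM(\MA_n^{(n)})$. Since all $n$ rows of ${\bf u}$ are nonzero, its type equals $n$; membership in $\MM(\MA_n^{(n)})$ then gives $m(\MA_n) \ge n$, and letting $n \to \infty$ proves the theorem.

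By Proposition~\ref{universalmarkov}, it suffices to show that ${\bf u}$ admits no proper strongly semiconformal decomposition. Suppose for contradiction that ${\bf u} =_{ssc} {\bf u}_1 + \cdots + {\bf u}_l$ is such a decomposition. An inductive grouping argument reduces to the two-summand case: for $l \ge 3$, either ${\bf u}_1 + {\bf u}_2 \ne {\bf 0}$ and merging these two summands yields a proper strongly semiconformal decomposition with $l-1$ parts (using $({\bf u}_1+{\bf u}_2)^+ \le {\bf u}_1^+ + {\bf u}_2^+$ to verify the first of the strongly semiconformal inequalities), or ${\bf u}_1 + {\bf u}_2 = {\bf 0}$ in which case ${\bf u} =_{ssc} {\bf u}_3 + \cdots + {\bf u}_l$ is a proper strongly semiconformal decomposition with $l-2$ parts, the remaining one-summand case being ruled out by strictness of ${\bf u}^+ > {\bf u}_l^+$.

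Assume then ${\bf u} = {\bf u}_1 +_{ssc} {\bf u}_2$. The entry-wise inequalities ${\bf u}^+ \ge {\bf u}_1^+$ and ${\bf u}^- \ge {\bf u}_2^-$, together with the column-sum constraints $\sum_i ({\bf u}_1)_{i,c} = 0$ (which hold because ${\bf u}_1 \in \ML(\MA_n^{(n)})$), establish the sign hypotheses of Lemma~\ref{3} for row $n$: one checks $({\bf u}_1)_{n,1} \in [2-n, 0]$ (the lower bound coming from the column-$1$ sum together with $({\bf u}_1)_{i,1} \le 1$ for $i \le n-2$ and $({\bf u}_1)_{n-1,1} \le 0$) and $({\bf u}_1)_{n,2} \in [0, n-2]$. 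Lemma~\ref{3} then forces the dichotomy $({\bf u}_1)_n = {\bf 0}$ or $({\bf u}_1)_n = {\bf u}_n$. In the first case the column-$2$ sum forces $({\bf u}_1)_{i,2} = 0$ for all $i$, whence the fiber enumeration behind Lemmas~\ref{1} and~\ref{2} (justified by Proposition~\ref{semiconformal}) restricts each $({\bf u}_1)_i$ for $i \le n-2$ to the sub-list $\{{\bf 0}, (1-n, 0, -1, 1), (1-n^2, 0, 0, 1)\}$; all three candidates have nonpositive first entry, so the column-$1$ sum forces each $({\bf u}_1)_i = {\bf 0}$, and the remaining column sums then give ${\bf u}_1 = {\bf 0}$. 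In the second case the column-$1$ sum forces $({\bf u}_1)_{i,1} = 1$ for every $i \le n-2$, Lemma~\ref{1} then restricts $({\bf u}_1)_i$ to $\{(1,-1,-1,1), (1,-n,0,1)\}$, and with $p+q=n-2$ counting the two options the column-$2$ identity $({\bf u}_1)_{n-1,2} = (n-1)q$ together with $({\bf u}_1)_{n-1,2} \le 0$ forces $q = 0$; hence ${\bf u}_1 = {\bf u}$ and ${\bf u}_2 = {\bf 0}$. Either conclusion contradicts properness.

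The main obstacle I anticipate is the transition from a row-wise semiconformal constraint to the rigidity of the full $n\times 4$ matrix ${\bf u}_1$: there are $n+4$ semiconformal decompositions of the row $(1,-1,-1,1)$ in $\ML(\MA_n)$, so the naive combinatorial count grows rapidly, and it is the specific arithmetic of the generators $(1, n, n^2-n, n^2-1)$ — in particular that the coefficients $(n-1)$ and $(n^2-1)$ appearing in the column-$1$ and column-$2$ balances are strictly positive for $n \ge 3$ — that makes the dichotomy from Lemma~\ref{3} collapse each surviving option to the two trivial ones described above. The analysis of row $n-1$ presents no further difficulty once row $n$ is pinned down, since it is then determined by the remaining column-sum constraints.
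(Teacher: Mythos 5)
Your proof is correct in its essential content and, at its core, coincides with the paper's argument: the same family $\MA_n=(1,n,n^2-n,n^2-1)$, the same type-$n$ witness ${\bf u}$, the same use of Lemma~\ref{3} on the last row to split into the two cases $({\bf u}_1)_n={\bf 0}$ or $({\bf u}_2)_n={\bf 0}$, and the same column-sum bookkeeping combined with the enumerations of Lemmas~\ref{1} and~\ref{2} to collapse each case to a trivial decomposition. The one genuine difference is the top-level framing. The paper proves the stronger statement that ${\bf u}$ is \emph{indispensable}, i.e.\ ${\bf u}\in\MS(\MA_n^{(n)})$, by ruling out proper \emph{semiconformal} decompositions via Proposition~\ref{indispensable}; since semiconformality is intrinsically a two-term notion, no reduction step is needed, and your entire row-by-row analysis only ever invokes the inequalities ${\bf u}^+\ge{\bf u}_1^+$ and ${\bf u}^-\ge{\bf u}_2^-$, which are exactly the semiconformal conditions. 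You instead rule out proper \emph{strongly} semiconformal decompositions (Proposition~\ref{universalmarkov}), which forces you to first reduce an $l$-term decomposition to a two-term one by merging summands. That reduction is the only fragile point: the sub-case ${\bf u}_1+{\bf u}_2={\bf 0}$ with $l=3$ leaves ${\bf u}={\bf u}_3$, and your dismissal of it "by strictness of ${\bf u}^+>{\bf u}_l^+$" depends on how the componentwise order in the definition of $=_{ssc}$ is read (the paper uses the same symbol $>$ for what is effectively $\ge$ elsewhere, e.g.\ in the proof of Theorem~\ref{restriction}). The cleaner fix, and what the paper implicitly exploits, is that any proper strongly semiconformal decomposition already yields the proper two-term semiconformal decomposition ${\bf u}={\bf u}_1+_{sc}({\bf u}-{\bf u}_1)$, so one may work with semiconformal decompositions from the outset; this both removes the reduction and upgrades the conclusion from ${\bf u}\in\MM(\MA_n^{(n)})$ to ${\bf u}\in\MS(\MA_n^{(n)})$. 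Either conclusion gives $m(\MA_n)\ge n$ and hence the theorem.
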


\begin{proof}
We will show that the type $n$ element
\[
{\bf u} = \left( \begin{array}{cccc}
1 & -1 & -1  & 1 \\
1 & -1 & -1  & 1 \\
 & & \ddots &  \\
1 & -1 & -1  & 1 \\
0 & 0 & n+1 & -n \\
2-n & n- 2& -3 & 2
\end{array} \right)
\]
 belongs to every Markov basis of $\MA_n^{(n)}$.
Which means we wish to show that the element ${\bf u}$ is indispensable, namely that it belongs to $S(\MA^{(n)})$,
the intersection of all the minimal Markov bases. Let us assume on the contrary that the element ${\bf u}$ is not indispensable.
Proposition 1.1 in \cite{HaraThomaVladoiu14}, implies that ${\bf u}$ admits a proper semiconformal
decomposition ${\bf u}={\bf v}+_{sc} {\bf w}$, where ${\bf u},{\bf v},{\bf w} \in \mathcal{L}(\MA_n^{(n)})$ such that
$$
{\bf v}_{ij}>0 \Rightarrow {\bf w}_{ij} \geq 0 \text{ and }
{\bf w}_{ij}<0 \Rightarrow {\bf v}_{ij} \leq 0 \text{, }
$$
for any $1 \leq i \leq n, 1 \leq j \leq 4$. In terms of signs, for each row of the vector ${\bf u}$ we have
\begin{eqnarray*}
(1,-1,-1,1) & = & ( * , \ominus , \ominus , * ) +_{sc} ( \oplus ,* , * , \oplus) \\
(0, 0, n+1, -n) & = & (\ominus,\ominus,*,\ominus) +_{sc} (\oplus,\oplus,\oplus,*)\\
(n-2, 2-n, -3, 2) & = & (\ominus,*,\ominus,*) +_{sc} (*,\oplus,*,\oplus) \text{.}
\end{eqnarray*}

The symbol $\ominus$ means that the corresponding integer is non positive, the symbol $\oplus$ non negative and the symbol $*$ means that it can take any value.

  Let ${\bf u}={\bf v}+_{sc} {\bf w}$ be a semiconformal decomposition of ${\bf u}$, then the sign pattern of the elements ${\bf v}, {\bf w}$ is:
 \[
{\bf u} = \left( \begin{array}{cccc}
1 & -1 & -1  & 1 \\
1 & -1 & -1  & 1 \\
 & & \ddots &  \\
1 & -1 & -1  & 1 \\
0 & 0 & n+1 & -n \\
2-n &n-2& -3 & 2
\end{array} \right)
= \left( \begin{array}{cccc}
* & \ominus & \ominus  & * \\
* & \ominus & \ominus & * \\
 & & \ddots &  \\
* & \ominus & \ominus  & * \\
\ominus & \ominus & * & \ominus \\
\z \ominus & \z * & \ominus & *
\end{array} \right)
+_{sc}
\left( \begin{array}{cccc}
\oplus & * & *  & \oplus \\
\oplus & * & *  & \oplus \\
 & & \ddots &  \\
\oplus & * & *  & \oplus \\
\oplus & \oplus & \oplus & * \\
\z * & \z \oplus & * & \oplus
\end{array} \right)
\]

Considering that the sum of every column should be zero, we conclude that the last element of the second column of ${\bf v}$, ${\bf v}_{n,2}$, is
non-negative and the last element of the first column of ${\bf w}$, ${\bf w}_{n,1}$, is non-positive. This means that in the $n^{th}$ row the elements  highlighted in grey above are;
$ {\bf v}_{n,1},  {\bf w}_{n,1}$ which are non-positive and the elements $ {\bf v}_{n,2},  {\bf w}_{n,2}$ which are non-negative. From Lemma~\ref{3}, we distinguish two cases for the last row: first case that the last row of ${\bf w}$ is zero or second case that the last row of ${\bf v}$ is zero.

In the first case the decomposition of ${\bf u}$ becomes
\[
 \left( \begin{array}{cccc}
1 & -1 & -1  & 1 \\
1 & -1 & -1  & 1 \\
 & & \ddots &  \\
1 & -1 & -1  & 1 \\
0 & 0 & n+1 & -n \\
2-n &n-2& -3 & 2
\end{array} \right)
= \left( \begin{array}{cccc}
* & \ominus & \ominus  & * \\
* & \ominus & \ominus & * \\
 & \ddots & &  \\
* & \ominus & \ominus  & * \\
\ominus & \ominus & * & \ominus \\
2-n &n-2& -3 & 2
\end{array} \right)
+_{sc}
\left( \begin{array}{cccc}
\z \oplus & * & *  & \oplus \\
\z \oplus & * & *  & \oplus \\
\z & \ddots & &  \\
\z\oplus & * & *  & \oplus \\
\z \oplus & \oplus & \oplus & * \\
\z 0 & 0 & 0 & 0
\end{array} \right)
\]

The first column of ${\bf w}$, highlighted in gray above, is non negative and adds to zero, thus, all the column is zero. So,
\[
\left( \begin{array}{cccc}
1 & -1 & -1  & 1 \\
1 & -1 & -1  & 1 \\
 & \ddots & &  \\
1 & -1 & -1  & 1 \\
0 & 0 & n+1 & -n \\
2-n &n-2& -3 & 2
\end{array} \right)
= \left( \begin{array}{cccc}
1 & \z \ominus & \ominus  & * \\
1 & \z \ominus & \ominus & * \\
 & \z & \ddots &  \\
1 & \z \ominus & \ominus  & * \\
0 & \z \ominus & * & \ominus \\
2-n & \z n-2& -3 & 2
\end{array} \right)
+_{sc}
\left( \begin{array}{cccc}
0 & * & *  & \oplus \\
0 & * & *  & \oplus \\
 & & \ddots &  \\
0 & * & *  & \oplus \\
0 & \oplus & \oplus & * \\
0 & 0 & 0 & 0
\end{array} \right) \text{.}
\]

By Lemma~\ref{1}, we see that if the first element of the first vector in the semiconformal decomposition of $(1, -1, -1,1)$ is $1$, then there are exactly two ways of decomposing
semiconformally the element $(1, -1, -1,1)$; namely
$$(1, -1, -1,1) +_{sc} (0,0,0,0) \text{ or } (1, -n, 0,1) +_{sc} (0,n-1,-1,0) \text{.}$$

This means that each one of the first $n-2$ rows of ${\bf v}$ should be either $(1, -1, -1,1)$ or $(1, -n, 0,1) $. By looking at the second
column of ${\bf v}$, highlighted in gray above, we see that the first $n-2$ elements are either $-1$ or $-n$ , the $(n-1)^{th}$ element is non-positive and the last is $n-2$. Since they add to zero, this
forces all of the first $n-2$ elements to be $-1$ and the $(n-1)^{th}$ element to be zero. So $-1$ in the second position means that we are in the first decomposition
$(1, -1, -1,1)= (1, -1, -1,1) +_{sc} (0,0,0,0)$. Therefore,

\[
 \left( \begin{array}{cccc}
1 & -1 & -1  & 1 \\
1 & -1 & -1  & 1 \\
 & & \ddots &  \\
1 & -1 & -1  & 1 \\
0 & 0 & n+1 & -n \\
2-n &n-2& -3 & 2
\end{array} \right)
= \left( \begin{array}{cccc}
1 & -1 & -1  & 1 \\
1 & -1 & -1  & 1 \\
 & & \ddots &  \\
1 & -1 & -1  & 1 \\
0 & 0 & * & \ominus \\
2-n &n-2& -3 & 2
\end{array} \right)
+_{sc}
\left( \begin{array}{cccc}
0 & 0 & 0  & 0 \\
0 & 0 & 0  & 0 \\
 & & \ddots &  \\
0 & 0 & 0  & 0 \\
0 & 0 & \z \oplus & \z * \\
0 & 0 & 0 & 0
\end{array} \right) \text{.}
\]

Looking at ${\bf w}$, in particular the entries highlighted in gray above, and considering that each column adds to zero, we have that
\[
\left( \begin{array}{cccc}
1 & -1 & -1  & 1 \\
1 & -1 & -1  & 1 \\
 & & \ddots &  \\
1 & -1 & -1  & 1 \\
0 & 0 & n+1 & -n \\
2-n &n-2& -3 & 2
\end{array} \right)
= \left( \begin{array}{cccc}
1 & -1 & -1  & 1 \\
1 & -1 & -1  & 1 \\
 & & \ddots &  \\
1 & -1 & -1  & 1 \\
0 & 0 & n+1 & -n \\
2-n &n-2& -3 & 2
\end{array} \right)
+_{sc}
\left( \begin{array}{cccc}
0 & 0 & 0  & 0 \\
0 & 0 & 0  & 0 \\
 & & \ddots &  \\
0 & 0 & 0  & 0 \\
0 & 0 & 0 & 0 \\
0 & 0 & 0 & 0
\end{array} \right) \text{.}
\]

In the second case, the decomposition of ${\bf u}$ becomes:

\[
 \left( \begin{array}{cccc}
1 & -1 & -1  & 1 \\
1 & -1 & -1  & 1 \\
 & & \ddots &  \\
1 & -1 & -1  & 1 \\
0 & 0 & n+1 & -n \\
2-n &n-2& -3 & 2
\end{array} \right)
= \left( \begin{array}{cccc}
* & \z \ominus & \ominus  & * \\
* & \z \ominus & \ominus & * \\
 & \z & \ddots &  \\
* & \z \ominus & \ominus  & * \\
\ominus & \z \ominus & * & \ominus \\
0 & \z 0 & 0 & 0
\end{array} \right)
+_{sc}
\left( \begin{array}{cccc}
\oplus & * & *  & \oplus \\
\oplus & * & *  & \oplus \\
 & & \ddots &  \\
\oplus & * & *  & \oplus \\
\oplus & \oplus & \oplus & * \\
2-n & n-2 & -3 & 2
\end{array} \right) \text{.}
\]

The second column of ${\bf v}$, highlighted in gray above, is non positive and adds to zero, thus, all the column is zero. Then

\[
 \left( \begin{array}{cccc}
1 & -1 & -1  & 1 \\
1 & -1 & -1  & 1 \\
 & & \ddots &  \\
1 & -1 & -1  & 1 \\
0 & 0 & n+1 & -n \\
2-n &n-2& -3 & 2
\end{array} \right)
= \left( \begin{array}{cccc}
* & 0 & \ominus  & * \\
* & 0 & \ominus & * \\
 & & \ddots &  \\
* & 0 & \ominus  & * \\
\ominus & 0 & * & \ominus \\
0 & 0 & 0 & 0
\end{array} \right)
+_{sc}
\left( \begin{array}{cccc}
\z \oplus & -1 & *  & \oplus \\
\z \oplus & -1 & *  & \oplus \\
\z & & \ddots &  \\
\z \oplus & -1 & *  & \oplus \\
\z \oplus & 0 & \oplus & * \\
\z 2-n & n-2 & -3 & 2
\end{array} \right) \text{.}
\]

By Lemma~\ref{2}, we have that if the second element of the second vector in the semiconformal decomposition of $(1, -1, -1,1)$ is $-1$, then there are exactly three ways of decomposing
semiconformally the $(1, -1, -1,1)$; namely
$(0,0,0,0) +_{sc} (1, -1, -1,1)$ or $(1-n,0,-1,1) +_{sc} (n,-1,0,0)$ or $(1-n^2,0,0,1) +_{sc} (n^2,-1,-1,0) \text{.}$

This means that each one of the first $n-2$ rows of ${\bf w}$ should be either $(1, -1, -1,1)$ or $(n,-1,0,0)$ or $(n^2,-1,-1,0)$. By looking at the first
column of ${\bf w}$, highlighted in gray above, we see that the first $n-2$ elements are either $1$ or $n$ or $n^2$, the $(n-1)^{th}$ element is non-negative and the last is $2-n$. They all add to zero, so that
forces all of the first $n-2$ elements to be $1$ and the $(n-1)^{th}$ to be zero. Having $1$ in the first position means that we are in the first decomposition
$(1, -1, -1,1)= (0,0,0,0) +_{sc} (1, -1, -1,1)$. Therefore,

\[
 \left( \begin{array}{cccc}
1 & -1 & -1  & 1 \\
1 & -1 & -1  & 1 \\
 & & \ddots &  \\
1 & -1 & -1  & 1 \\
0 & 0 & n+1 & -n \\
2-n &n-2& -3 & 2
\end{array} \right)
= \left( \begin{array}{cccc}
0 & 0 & 0  & 0 \\
0 & 0 & 0 & 0 \\
 & & \ddots &  \\
0 & 0 & 0  & 0 \\
0 & 0 & \z * & \z \ominus \\
0 & 0 & 0 & 0
\end{array} \right)
+_{sc}
\left( \begin{array}{cccc}
1 & -1 & -1  & 1 \\
1 & -1 & -1  & 1 \\
 & & \ddots &  \\
1 & -1 & -1  & 1 \\
0 & 0 & \oplus & * \\
2-n & n-2 & -3 & 2
\end{array} \right) \text{.}
\]

Looking at ${\bf v}$, particularly the entries highlighted in gray above, and considering that each column adds to zero, we have that

\[
 \left( \begin{array}{cccc}
1 & -1 & -1  & 1 \\
1 & -1 & -1  & 1 \\
 & & \ddots &  \\
1 & -1 & -1  & 1 \\
0 & 0 & n+1 & -n \\
2-n &n-2& -3 & 2
\end{array} \right)
= \left( \begin{array}{cccc}
0 & 0 & 0  & 0 \\
0 & 0 & 0 & 0 \\
 & & \ddots &  \\
0 & 0 & 0  & 0 \\
0 & 0 & 0 & 0 \\
0 & 0 & 0 & 0
\end{array} \right)
+_{sc}
\left( \begin{array}{cccc}
1 & -1 & -1  & 1 \\
1 & -1 & -1  & 1 \\
 & & \ddots &  \\
1 & -1 & -1  & 1 \\
0 & 0 & n+1 & -n \\
2-n & n-2 & -3 & 2
\end{array} \right) \text{.}
\]

 Therefore the decomposition ${\bf u}={\bf v}+_{sc} {\bf w}$ can never been proper.
Thus, we conclude that ${\bf u}$ is indispensable, therefore it belongs to all Markov bases. \end{proof}

\begin{Remark} {\rm
We do not claim that the Markov complexity $m(\MA_n)$ of $\MA_n$ is $n$, but at least $n$. Indeed consider the example of the monomial curve
$\MA_5 = (1,5,20,24) \in \mathbb{A}^4$.
Then using the computational  package 4ti2 (see \cite{4ti2}),
we compute a Markov basis of $\MA_n^{(r)}$ for $r \leq 6$.
Table 1 includes the number of elements of the Markov basis of $\MA_n^{(r)}$ as well as the
largest type of any vector in the universal Markov basis of $\MA_n^{(r)}$ for each $r=1, \cdots 6$.

\begin{table}[h]
\label{ComplexityExample}
\begin{center}
 \begin{tabular}{||c | c | c||}
 \hline
 $r$th Lawrence lifting & \# elements of Markov basis & Type \\ [0.5ex]
 \hline\hline
 2 & 46 & 2 \\
 \hline
 3 & 174 & 3  \\
 \hline
 4 & 528 & 4  \\
 \hline
 5 & 1520 & 5  \\
 \hline
 6 & 4110 & 6  \\
 \hline
 7 & 10206 & 6  \\ [1ex]
 \hline
\end{tabular}
\caption {The monomial curve $\MA_5 = (1,5,20,24) \text{ in } \mathbb{A}^4$}
\end{center}
\end{table}

Therefore, this implies that the Markov complexity is at least 6.
In fact, the elements of type 6 in the sixth Lawrence lifting are

\[
 \left( \begin{array}{cccc}
0 & 0 & -6  & 5 \\
-2 & 2 & -4  & 3 \\
-2 & 2 & -4  & 3 \\
-2 & 2 & -4  & 3 \\
3 & -3 & 3  & -2 \\
3 & -3 & 3  & -2
\end{array} \right),
 \left( \begin{array}{cccc}
0 & 0 & -6  & 5 \\
0 & 0 & -6  & 5 \\
-1 & 1 & -5  & 4 \\
-1 & 1 & -5  & 4 \\
-1 & 1 & -5  & 4 \\
3 & -3 & 3  & -2
\end{array} \right)
\]
as well as all elements coming from permutation of the rows of the above matrices.
}
\end{Remark}

Note that the monomial curve $\MA_n= \{ 1, n, n^2-n, n^2-1\}$ is complete intersection, therefore from Theorem \ref{large} it
directly follows:
\begin{Corollary}
\label{ci} Complete intersection monomial curves in $\mathbb{A}^4$ may have arbitrary large Markov complexity.
\end{Corollary}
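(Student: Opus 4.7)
The plan is to observe that Theorem~\ref{large} already does all the heavy lifting: the unboundedness was demonstrated via the explicit family $\MA_n = \{1, n, n^2-n, n^2-1\}$, and for $n \geq 3$ these four integers are distinct (since $n < n^2-n$ iff $n > 2$, while $n^2-n < n^2-1$ for $n \geq 2$). So to conclude Corollary~\ref{ci}, I only need to verify that each member $\MA_n$ of this family is a complete intersection. The bulk of the proof is therefore a short, direct computation.

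To show $I_{\MA_n}$ is a complete intersection I would exhibit three binomial generators. The lattice elements
\[
(n,-1,0,0), \quad (0,n-1,-1,0), \quad (1,-1,-1,1)
\]
all belong to $\ML(\MA_n)$, as one checks by a one-line $\MA_n$-degree computation (for instance $1 \cdot 1 + (-1)n + (-1)(n^2-n) + 1\cdot(n^2-1) = 0$). They yield the binomials
\[
f_1 = x_1^n - x_2, \qquad f_2 = x_2^{n-1} - x_3, \qquad f_3 = x_1 x_4 - x_2 x_3,
\]
each of which lies in $I_{\MA_n}$.

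Next I would argue $I_{\MA_n} = (f_1, f_2, f_3)$ by computing the quotient directly. Modulo $f_1$ we have $x_2 \equiv x_1^n$; substituting into $f_2$ forces $x_3 \equiv x_1^{n^2-n}$; substituting both into $f_3$ forces $x_1 x_4 \equiv x_1^{n^2}$, hence $x_4 \equiv x_1^{n^2-1}$. Therefore $k[x_1,x_2,x_3,x_4]/(f_1,f_2,f_3) \cong k[x_1]$, which is a one-dimensional domain of Krull dimension matching that of $k[\MA_n]$. Since $(f_1,f_2,f_3) \subseteq I_{\MA_n}$ and both define the same one-dimensional integral quotient of the four-variable polynomial ring, they must be equal. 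Because $\height I_{\MA_n} = 3$ and $I_{\MA_n}$ is generated by three elements, it is a complete intersection.

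With $\MA_n$ complete intersection and Theorem~\ref{large} providing Markov complexity at least $n$, the corollary follows. There is essentially no obstacle here: the only technical point is choosing the three binomials so that the substitutions used to identify the quotient with $k[x_1]$ are consistent, which is precisely what the triple $(f_1, f_2, f_3)$ above accomplishes.
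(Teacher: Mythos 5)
Your overall strategy coincides with the paper's: Corollary~\ref{ci} is deduced from Theorem~\ref{large} simply by checking that each $\MA_n=(1,n,n^2-n,n^2-1)$ is a complete intersection (the paper asserts this without proof). However, your verification of the complete intersection property contains a genuine error. The three binomials $f_1=x_1^n-x_2$, $f_2=x_2^{n-1}-x_3$, $f_3=x_1x_4-x_2x_3$ do lie in $I_{\MA_n}$, but they do not generate it. Eliminating $x_2$ and $x_3$ via $f_1,f_2$ identifies $k[x_1,x_2,x_3,x_4]/(f_1,f_2,f_3)$ with $k[x_1,x_4]/\bigl(x_1(x_4-x_1^{n^2-1})\bigr)$, which is not a domain and is not $k[x_1]$: your step ``$x_1x_4\equiv x_1^{n^2}$, hence $x_4\equiv x_1^{n^2-1}$'' divides by $x_1$, which is a zerodivisor in this quotient. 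Concretely, every element of $(f_1,f_2,f_3)$ vanishes on the $x_4$-axis (for instance at the point $(0,0,0,1)$), whereas $x_4-x_1^{n^2-1}\in I_{\MA_n}$ does not, so $(f_1,f_2,f_3)\subsetneq I_{\MA_n}$. The subsequent appeal to ``containment plus equal height implies equality'' is also not a valid inference without knowing the smaller ideal is prime (or at least unmixed with the same components), which is exactly what fails here.

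The conclusion you want is nevertheless true and the fix is immediate: since $1$ is among the exponents, take instead $g_2=x_2-x_1^{n}$, $g_3=x_3-x_1^{n^2-n}$, $g_4=x_4-x_1^{n^2-1}$. These lie in $I_{\MA_n}$, the quotient $k[x_1,x_2,x_3,x_4]/(g_2,g_3,g_4)$ is literally $k[x_1]$ (each variable is eliminated outright, with no division by a zerodivisor), and the induced map $k[x_1]\to k[t]$ is injective, so $I_{\MA_n}=(g_2,g_3,g_4)$ is generated by $\height I_{\MA_n}=3$ elements and is a complete intersection. With that replacement your argument is complete and supplies the justification the paper leaves implicit.
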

\begin{Corollary}
\label{n} Monomial curves in $\mathbb{A}^m$, $m\ge 4$, may have arbitrary large Markov complexity.
\end{Corollary}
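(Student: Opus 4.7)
The plan is to bootstrap from Theorem~\ref{large} (the $\mathbb{A}^4$ case) using the elimination property for universal Markov bases of higher Lawrence liftings established in Theorem~\ref{restriction}. For fixed $m \geq 4$ and any natural number $n$, I would form an extended monomial curve
\[
\widetilde{\MA}_n = \{1,\, n,\, n^2-n,\, n^2-1,\, l_5,\, \ldots,\, l_m\}
\]
in $\mathbb{A}^m$, where $l_5, \ldots, l_m$ are arbitrarily chosen positive integers distinct from the first four entries; the particular values will play no role in the argument.

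Next, I would invoke Theorem~\ref{large} applied to the subfamily $\MA_n = \{1, n, n^2-n, n^2-1\} \subset \widetilde{\MA}_n$ to produce the distinguished type $n$ vector ${\bf u} \in \MM(\MA_n^{(n)})$ exhibited in that proof. Then I would apply Theorem~\ref{restriction} with $\MB = \MA_n$ and $\MA = \widetilde{\MA}_n$, which gives
\[
\MM(\MA_n^{(n)}) = \MM(\widetilde{\MA}_n^{(n)}) \cap \ML(\MA_n^{(n)}),
\]
so the image $\sigma({\bf u})$, obtained by appending $m - 4$ zero columns on the right, belongs to the universal Markov basis $\MM(\widetilde{\MA}_n^{(n)})$.

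Finally, since $\sigma$ only appends zero columns to an $n \times 4$ matrix, a row of $\sigma({\bf u})$ is nonzero if and only if the corresponding row of ${\bf u}$ is nonzero, so the type of $\sigma({\bf u})$ equals the type of ${\bf u}$, which is $n$. This yields $m(\widetilde{\MA}_n) \geq n$, and letting $n$ grow unboundedly gives the corollary.

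Because every step is a direct consequence of a previously proved result, there is no genuine obstacle here; the only subtlety worth emphasising is that the embedding $\sigma$ preserves the type of an element, which is the elementary observation that appending zero columns does not change the count of nonzero rows. In effect, the corollary says that the family constructed for $\mathbb{A}^4$ can be padded with arbitrary extra generators without losing its type $n$ indispensable element, precisely because Theorem~\ref{restriction} transports indispensability upward when passing from a sub-configuration to a larger one.
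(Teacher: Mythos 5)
Your proposal is correct and follows essentially the same route as the paper: both arguments pad the $\mathbb{A}^4$ family $\{1,n,n^2-n,n^2-1\}$ with extra generators and transport the type $n$ indispensable element upward via Theorem~\ref{restriction}. The only cosmetic difference is that the paper phrases the argument as a proof by contradiction against a uniform bound $d$, while you argue directly that $m(\widetilde{\MA}_n)\geq n$; the content is identical.
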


 \begin{proof} The proof for general $m\geq 5$ follows from Theorem~\ref{restriction} and Theorem~\ref{large}.

 Suppose there is a $d\in \NN$ such that $m(\MA)\leq d$ for all monomial curves $\MA$ in $\mathbb{A}^m$.
 If we consider the monomial curve $\MA = (1, n, n^2-n, n^2-1, a_{m-4}, \cdots, a_{m})$ in $\mathbb{A}^m$, where
  $n \geq d+1$ and $a_{m-4}, \cdots, a_{m}$ are any natural numbers,
 then this means that the largest type of any vector in the universal Markov basis of $\MA^{(r)}$ as $r$ varies
 would be at most $d$.

 We know by Theorem~\ref{large}, that $m(\MB)\geq n$ for $\MB = \{1, n, n^2-n, n^2-1\} \subset \MA$.
 This means that for any $r$-th Lawrence lifting  $r\geq n$, $\MB^{(r)}$ has an element of type at least
 $n$ inside the universal Markov basis
$\MM(\MB^{(r)})$. By Theorem~\ref{restriction}, there is an element of type at least $n$ inside $\MM(\MA^{(r)})$ as well.
This means that the Markov complexity is $m(\MA)\geq n$. Since $n \geq d+1$, we immediately reach a contradiction.
\end{proof}










\end{document}